\documentclass[11pt]{article}
\usepackage{amsfonts}
\usepackage{amssymb}
\usepackage{amsthm}
\usepackage{amsmath}

\usepackage{bm}
\usepackage{amscd} 

\usepackage{hyperref}

\numberwithin{equation}{section} \DeclareMathSizes{2}{10}{12}{13}
\parindent=0.0in

 \textwidth 6.7in
\textheight 8in

\oddsidemargin -0.3in
\evensidemargin -0.5in

\newtheorem{thm}{Proposition}[section]

\newtheorem{Thm}[thm]{Theorem}

\newtheorem{lem}[thm]{Lemma}
\newtheorem{defn}[thm]{Definition}

\newcommand{\leftsub}[2]{{\vphantom{#2}}_{#1}{#2}}

\title{
The $q$-analog of higher order Hochschild homology and the Lie derivative}
\author{Abhishek Banerjee
}
\date{ }

\begin{document}

\maketitle

\centerline{\emph{Dept. of Mathematics, Indian Institute of Science, Bengaluru - 560012, Karnataka, India. }}
\centerline{\emph{Email: abhishekbanerjee1313@gmail.com}}

\medskip
\begin{abstract}  Let $A$ be a commutative algebra over $\mathbb C$. Given a pointed simplicial finite set $Y$ and $q\in \mathbb C$ a primitive $N$-th root of unity, we define the $q$-Hochschild
homology groups $\{\leftsub{q}{HH}_n^Y(A)\}_{n\geq 0}$ of $A$ of order $Y$. When $D$ is a derivation on $A$, we construct the corresponding Lie derivative on the groups $\{\leftsub{q}{HH}_n^Y(A)\}_{n\geq 0}$. We also define the Lie derivative on $\{\leftsub{q}{HH}_n^Y(A)\}_{n\geq 0}$
for a higher derivation $\{D_n\}_{n\geq 0}$ on $A$. Finally, we describe the morphisms induced
on the bivariant $q$-Hochschild cohomology groups $\{\leftsub{q}{HH}^n_Y(A,A)\}_{n\in\mathbb Z}$ 
of order $Y$ by a derivation $D$ on $A$. 
\end{abstract}
 
 \medskip
  {\bf MSC(2010) Subj Classification:} 16W25  
  
  Keywords: Hochschild homology, higher derivations, Lie derivative

 \section{Introduction}
 
 \medskip
 
 Let $A$ be a commutative algebra over $\mathbb C$. Then, it is well known (see, for instance,
 \cite[$\S$ 4.1]{Lod}) that a derivation $D:A\longrightarrow A$ induces morphisms
 \begin{equation}
 L_D^n:HH_n(A)\longrightarrow HH_n(A) \qquad \forall \textrm{ }n\geq 0
 \end{equation} on the Hochschild homology groups of the algebra $A$. The morphisms
 $L_D^n$, $n\geq 0$ play the role of the Lie derivative in noncommutative geometry. For more 
 on these morphisms and for general properties of Hochschild homology, we refer the reader
 to \cite{Lod}. Further, for any pointed simplicial finite set $Y$, Pirashvili \cite{TP} has introduced
 the Hochschild homology groups $\{HH_n^Y(A)\}_{n\geq 0}$ of $A$ of order $Y$ (see also
 Loday \cite{Lod2}). In particular, when $Y=S^1$ is the simplicial circle, the groups 
 $\{HH_n^{S^1}(A)\}_{n\geq 0}$ reduce to the usual Hochschild homology groups of the algebra
 $A$. Let $q\in \mathbb C$ be a primitive $N$-th root of unity. The purpose of this paper is to introduce the $q$-analogues 
 $\{\leftsub{q}{HH}_n^Y(A)\}_{n\geq 0}$ of these higher order Hochschild homology groups
 and study the morphisms induced on them by derivations on $A$. 
 
 \medskip
 More precisely, let $\Gamma$ denote the category whose objects are the finite sets $[n]=\{0,1,2,..,n\}$, $n\geq 0$ with basepoint $0\in [n]$. Then, given the algebra $A$, we can define a functor $\mathcal L(A)$ from $\Gamma$ to the category $Vect$ of complex vector spaces that takes $[n]$ to
 $A\otimes A^{\otimes n}$ (see Section 2 for details). Then, we can prolong $\mathcal L(A)$ by means
 of colimits to a functor $\mathcal L(A):Fin_*\longrightarrow Vect$ from the category $Fin_*$ of all finite sets with basepoint. Given a pointed simplicial finite set $Y$, i.e., a functor $Y:\Delta^{op}
 \longrightarrow Fin_*$ ($\Delta^{op}$ being the simplex category), we now have a simplicial
 vector space
 \begin{equation}
 \begin{CD}
 \mathcal L^Y(A):\Delta^{op}@>Y>> Fin_*@>\mathcal L(A)>> Vect
 \end{CD}
 \end{equation} Let $d^j_i:\mathcal L^Y(A)_i\longrightarrow 
 \mathcal L^Y(A)_{i-1}$, $0\leq j\leq i$, $i\geq 0$ be the face maps of the simplicial vector space $\mathcal L^Y(A)$. We then construct the ``$q$-Hochschild differentials'':
 \begin{equation}
 \leftsub{q}{b}_i:\mathcal L^Y(A)_{i}\longrightarrow \mathcal L^Y(A)_{i-1}\qquad 
 \leftsub{q}{b}_i:=\sum_{j=0}^iq^jd_i^j
 \end{equation} Since $q\in \mathbb C$ is a primitive $N$-th root of unity, it follows that
 $\leftsub{q}{b}^N=0$, i.e., $(\mathcal L^Y(A),\leftsub{q}{b})$ is an $N$-complex in the sense
 of Kapranov \cite{Kap}. We now define the $q$-Hochschild homology groups $\{\leftsub{q}{HH}_n^Y(A)\}_{n\geq 0}$ of $A$ of order $Y$ to be the homology objects of the $N$-complex
 $(\mathcal L^Y(A),\leftsub{q}{b})$ (see Definitions \ref{Def00} and \ref{Def01}). When $q=-1$ (and hence $N=2$), $\leftsub{q}{b}$ reduces to the usual differential on the chain complex associated to the simplicial
 vector space $\mathcal L^Y(A)$ and we have $\leftsub{(-1)}{HH}^Y_n(A)=HH_n^Y(A)$, $\forall$
 $n\geq 0$. Then, the main result of Section 2 is as follows. 
 
 \medskip
 \begin{Thm}\label{Thm1} Let $D:A\longrightarrow A$ be a derivation on $A$. Then, for each $n\geq 0$, the 
 derivation $D$ induces a morphism $L_D^{Y,n}:\leftsub{q}{HH}_n^Y(A)\longrightarrow 
 \leftsub{q}{HH}_n^Y(A)$ of $q$-Hochschild homology groups of order $Y$. Additionally, if
 $\mathcal H=\mathcal U(Der(A))$ is the universal enveloping algebra of the Lie algebra
 $Der(A)$ of derivations on $A$, each $\leftsub{q}{HH}_n^Y(A)$ is a left $\mathcal H$-module,
 i.e., for any element $h\in \mathcal H$, there exist morphisms 
 $L_h^{Y,n}:\leftsub{q}{HH}_n^Y(A)\longrightarrow 
 \leftsub{q}{HH}_n^Y(A)$ of $q$-Hochschild homology groups of order $Y$.
 \end{Thm}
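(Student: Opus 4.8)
\emph{Proof proposal.} The plan is to realise the Lie derivative already at the level of the Loday functor, as a natural transformation, and then transport it along $Y$ and down to homology. First I would define, on each object $\mathcal L(A)([m])=A\otimes A^{\otimes m}$ of $\Gamma$, the operator
\[
L_D\bigl(a_0\otimes a_1\otimes\cdots\otimes a_m\bigr)\;=\;\sum_{k=0}^{m}a_0\otimes\cdots\otimes a_{k-1}\otimes D(a_k)\otimes a_{k+1}\otimes\cdots\otimes a_m,
\]
the Leibniz prolongation of $D$ to tensor powers. The crucial point is that $L_D$ is a natural transformation $\mathcal L(A)\Rightarrow\mathcal L(A)$ of functors on $\Gamma$. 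For a pointed map $f:[m]\to[n]$, the structure map $\mathcal L(A)(f)$ sends $a_0\otimes\cdots\otimes a_m$ to $b_0\otimes\cdots\otimes b_n$ with $b_j=\prod_{i\in f^{-1}(j)}a_i$ (empty products being $1$), and the equality $\mathcal L(A)(f)\circ L_D=L_D\circ\mathcal L(A)(f)$ is precisely the Leibniz identity $D\bigl(\prod_{i\in f^{-1}(j)}a_i\bigr)=\sum_{k\in f^{-1}(j)}D(a_k)\prod_{i\ne k}a_i$ carried out in each slot $j$ --- the same commutativity of $A$ that makes $\mathcal L(A)$ well defined on $\Gamma$ is what I would use here. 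Since $\mathcal L(A)$ is prolonged to $Fin_*$ by colimits, this natural transformation extends uniquely to a natural endomorphism of $\mathcal L(A):Fin_*\to Vect$, and composing with $Y:\Delta^{op}\to Fin_*$ gives a map of simplicial vector spaces $L_D:\mathcal L^Y(A)\to\mathcal L^Y(A)$; in particular $L_D$ commutes with every face map $d^j_i$.

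Next I would pass to the $N$-complex. Since $L_D\,d^j_i=d^j_i\,L_D$ for all $0\le j\le i$, it follows that $L_D$ commutes with $\leftsub{q}{b}_i=\sum_{j=0}^iq^jd^j_i$, so $L_D$ is a degree-preserving endomorphism of the $N$-complex $(\mathcal L^Y(A),\leftsub{q}{b})$. Any such endomorphism preserves $\ker\bigl((\leftsub{q}{b})^r\bigr)$ and $\mathrm{im}\bigl((\leftsub{q}{b})^{N-r}\bigr)$ for each $r$, hence induces an endomorphism $L_D^{Y,n}$ of every homology object $\leftsub{q}{HH}_n^Y(A)$ of Definitions \ref{Def00} and \ref{Def01}. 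This gives the first assertion.

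For the $\mathcal H$-module structure, I would check two algebraic facts about the assignment $D\mapsto L_D$: it is $\mathbb C$-linear (immediate from the formula), and it satisfies $[L_D,L_{D'}]=L_{[D,D']}$. The latter is a short computation on $\mathcal L(A)([m])$: expanding $L_DL_{D'}$ and $L_{D'}L_D$, the terms in which $D$ and $D'$ land on two distinct tensor slots coincide after relabelling, while the terms in which both land on the same slot $k$ contribute $DD'(a_k)$, resp. $D'D(a_k)$, whose difference over all $k$ is exactly $L_{[D,D']}$. Thus $D\mapsto L_D$ is a homomorphism of Lie algebras from $Der(A)$ to $\mathrm{End}_{\mathbb C}(\mathcal L^Y(A))$ compatible with $\leftsub{q}{b}$, hence descends for each $n$ to a Lie algebra homomorphism $Der(A)\to\mathrm{End}_{\mathbb C}\bigl(\leftsub{q}{HH}_n^Y(A)\bigr)$. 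By the universal property of $\mathcal H=\mathcal U(Der(A))$ it extends to an associative algebra homomorphism $\mathcal H\to\mathrm{End}_{\mathbb C}\bigl(\leftsub{q}{HH}_n^Y(A)\bigr)$, which is the desired left $\mathcal H$-module structure; for $h\in\mathcal H$ the operator $L_h^{Y,n}$ is the image of $h$, and the first assertion is recovered by taking $h=D\in Der(A)\subseteq\mathcal H$.

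The only step I expect to require genuine care is the naturality check in the first paragraph --- that $L_D$ commutes with the fibre-collapsing structure maps $\mathcal L(A)(f)$ --- since this is where the algebra structure of $A$ (commutativity and the Leibniz rule) really enters; the remaining steps are formal manipulations with $N$-complexes and the universal enveloping algebra. One should also note, as a sanity check, that for $Y=S^1$ and $q=-1$ this construction reduces to the classical Lie derivative $L_D^n$ on $HH_n(A)$.
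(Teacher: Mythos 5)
Your proposal is correct and follows essentially the same route as the paper: the paper's Lemma \ref{lem1} establishes exactly your naturality of the Leibniz prolongation $L_D$ on $\Gamma$ and its colimit extension to $Fin_*$, Proposition \ref{prop2.4r} passes through $Y$ to the $N$-complex and its homology, and Lemma \ref{lemKS} together with Proposition \ref{prop2.6} gives the commutator identity $[L_D,L_{D'}]=L_{[D,D']}$ and the $\mathcal U(Der(A))$-module structure via the universal property, just as you do. The only (welcome) difference is that you make explicit why a $\leftsub{q}{b}$-commuting endomorphism descends to the homology objects $H_{\{i,n\}}$ of the $N$-complex, a point the paper leaves implicit.
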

 
 Thereafter, we consider a higher derivation $D=\{D_n\}_{n\geq 0}$ on $A$. We recall that a higher (or Hasse-Schmidt) derivation $D=\{D_n\}_{n\geq 0}$ on $A$ is a sequence of
 linear maps $D_n:A\longrightarrow A$ satisfying the following relation (see, for example, \cite{Mat86}):
 \begin{equation}
 D_n(a\cdot a')=\sum_{i=0}^nD_i(a)\cdot D_{n-i}(a')\qquad \forall \textrm{ }a,a'\in A, n\geq 0
 \end{equation} In this paper, we restrict ourselves to normalized higher derivations, i.e.,
 higher derivations $D=\{D_n\}_{n\geq 0}$ such that $D_0=1$. Then, in Section 3, we construct the Lie derivative on the $q$-Hochschild homology groups of order $Y$ corresponding to a higher derivation $D=\{D_n\}_{n\geq 0}$. 
 
 \medskip
 \begin{Thm}\label{Thm2} Let $D=\{D_n\}_{n\geq 0}$ be a normalized higher derivation on $A$. Then, for each $k\geq 0$, we have an induced morphism
$L_D^{Y,k}:\leftsub{q}{HH}_*^Y(A)=\underset{n=0}{\overset{\infty}{\bigoplus}}\textrm{ }\leftsub{q}{HH}_n^Y(A)\longrightarrow 
\leftsub{q}{HH}_*^Y(A) =\underset{n=0}{\overset{\infty}{\bigoplus}}\textrm{ }\leftsub{q}{HH}_n^Y(A)$ on the $q$-Hochschild homology groups of $A$ of order $Y$. 
 \end{Thm}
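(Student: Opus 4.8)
The plan is to reduce Theorem~\ref{Thm2} to the case of a single (ordinary) derivation handled in Theorem~\ref{Thm1}, by exhibiting a normalized higher derivation as a "one-parameter group" and differentiating it. Concretely, I would first attach to the higher derivation $D=\{D_n\}_{n\geq 0}$ the single algebra map $\widetilde D:A\longrightarrow A[[t]]$, $a\mapsto \sum_{n\geq 0}D_n(a)t^n$; the identity $D_n(aa')=\sum_{i}D_i(a)D_{n-i}(a')$ together with $D_0=1$ says precisely that $\widetilde D$ is a unital $\mathbb C$-algebra homomorphism lifting $\mathrm{id}_A$ modulo $t$. Equivalently, the maps $a\mapsto \sum_{n}D_n(a)t^n$ make $A[[t]]$ (or, truncating, $A[t]/(t^{k+1})$) into an $A$-algebra in a way that deforms the identity. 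The point of introducing this is that a higher derivation is the same data as a continuous derivation-valued object, and each homogeneous piece $L_D^{Y,k}$ we want is the coefficient of $t^k$ in a suitable "action" of $\widetilde D$.

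Next I would make the naturality of the whole construction do the work. Everything in Section~2 — the functor $\mathcal L(A):\Gamma\to Vect$, its prolongation to $Fin_*$, the simplicial vector space $\mathcal L^Y(A)$, the $q$-Hochschild differentials $\leftsub{q}{b}_i=\sum_j q^j d_i^j$, and hence the $N$-complex $(\mathcal L^Y(A),\leftsub{q}{b})$ and its homology $\leftsub{q}{HH}_n^Y(A)$ — is functorial in the commutative algebra $A$. Applying this functoriality to the base-change map $A\to A[t]/(t^{k+1})$ and to the two algebra maps $A[t]/(t^{k+1})\rightrightarrows A$ (the canonical projection $t\mapsto 0$ and the "evaluation via $\widetilde D$") would give maps on $\leftsub{q}{HH}_*^Y$ which, combined appropriately, produce the candidate $L_D^{Y,k}$. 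In more hands-on terms: on the chain level $\widetilde D$ induces $\mathcal L^Y(\widetilde D):\mathcal L^Y(A)\to \mathcal L^Y(A)[[t]]=\mathcal L^Y(A)\otimes\mathbb C[[t]]$, a morphism of $N$-complexes (since $\leftsub{q}{b}$ is defined from face maps that are natural in the algebra, and $\mathbb C[[t]]$ is central so it commutes with forming $\leftsub{q}{b}$), and therefore induces $\leftsub{q}{HH}_n^Y(A)\to \leftsub{q}{HH}_n^Y(A)[[t]]$ for every $n$; one then reads off $L_D^{Y,k}$ as the degree-$k$ Taylor coefficient, so that $L_D^{Y,0}=\mathrm{id}$ and $L_D^{Y,k}$ shifts nothing in simplicial degree (hence lands in the same $\leftsub{q}{HH}_n^Y$, explaining why the statement is phrased on the total group $\bigoplus_n \leftsub{q}{HH}_n^Y(A)$ only as a bookkeeping device). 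I would spell out once that the map on normalized chains sends $a_0\otimes a_1\otimes\cdots\otimes a_i$ to $\sum_{k_0,\dots,k_i} D_{k_0}(a_0)\otimes\cdots\otimes D_{k_i}(a_i)\,t^{k_0+\cdots+k_i}$, verify it is well defined on $\mathcal L^Y(A)_i$ (using only that each $D_{k_j}$ is $\mathbb C$-linear and that $D_0=1$, so degeneracies/basepoint insertions of the unit are respected), and note it commutes with every $d_i^j$ because $d_i^j$ is built from multiplication in $A$ and $\widetilde D$ is an algebra map, hence with $\leftsub{q}{b}_i$.

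The main obstacle I expect is the verification that $\mathcal L^Y(\widetilde D)$ is genuinely a chain map for the $q$-twisted differential, not merely for the untwisted one. The subtlety is that $\leftsub{q}{b}_i=\sum_{j=0}^i q^j d_i^j$ mixes the different face maps with coefficients, so commuting past $\mathcal L^Y(\widetilde D)$ requires that $\mathcal L^Y(\widetilde D)$ commute with \emph{each} $d_i^j$ separately — which is exactly what functoriality of $Y\mapsto \mathcal L^Y(-)$ in the algebra variable gives, since the face maps of $\mathcal L^Y(A)$ are the images under $\mathcal L(A)$ of the face maps of $Y$ in $Fin_*$ and these are natural in $A$. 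Once that is in hand, the $N$-complex structure is preserved because $\leftsub{q}{b}^N=0$ holds functorially over the central ring $\mathbb C[[t]]$. A secondary, purely formal point is to check well-definedness of the total map $\bigoplus_n\leftsub{q}{HH}_n^Y(A)\to\bigoplus_n\leftsub{q}{HH}_n^Y(A)$: since $L_D^{Y,k}$ is defined degree-by-degree in $n$ and is $\mathbb C$-linear, no convergence issue arises (for fixed homological degree $n$ only finitely many $D_{k_j}$ with $k_j\leq k$ can contribute once one works modulo $t^{k+1}$), so the direct sum of the degree-$n$ maps is the required morphism. If one prefers to avoid power series entirely, the same argument runs with $A[t]/(t^{k+1})$ in place of $A[[t]]$ and extracting the $t^k$-component; the essential content is unchanged.
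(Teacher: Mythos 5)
Your argument is correct and lands on exactly the same chain-level operator as the paper (the $t^k$-coefficient of $\widetilde D^{\otimes (n+1)}$ is the map $(a_0\otimes\cdots\otimes a_n)\mapsto \sum_{p_0+\cdots+p_n=k}D_{p_0}(a_0)\otimes\cdots\otimes D_{p_n}(a_n)$ of Lemma \ref{lem3.3}), but the route is genuinely different. The paper fixes $k$ and verifies directly, by the manipulation of sums over compositions in \eqref{3.5}, that $L_D^k$ is an endomorphism of the functor $\mathcal L(A):\Gamma\longrightarrow Vect$ (the key input being $D_{p}\bigl(\prod a_i\bigr)=\sum_{\sum q_i=p}\prod D_{q_i}(a_i)$), then extends over $Fin_*$ by colimits and composes with $Y$. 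You instead encode the whole family at once in the unital algebra homomorphism $\widetilde D:A\longrightarrow A[[t]]$ (unitality is where normalization $D_0=1$, $D_n(1)=0$ for $n>0$ enters, and it is needed to handle the empty products $b_j=\prod_{\phi(i)=j}a_i$ when $\phi^{-1}(j)=\emptyset$) and appeal to functoriality of $A\mapsto\mathcal L^Y(A)$ in the algebra variable; comparing coefficients of $t^k$ in the multiplicativity of $\widetilde D$ recovers precisely the identity checked by hand in \eqref{3.5}, and compatibility with every face map, hence with $\leftsub{q}{b}$, becomes automatic. What your packaging costs is, first, that you must actually state and prove the functoriality of the Loday construction in $A$, which the paper never records (it is the one-line observation that an algebra map commutes with the products in \eqref{2.2}, plus the colimit extension as in Lemma \ref{lem1}); and second, a bookkeeping point: $\mathcal L^Y(A[[t]])$ is not literally $\mathcal L^Y(A)\otimes\mathbb C[[t]]$, since $\otimes_{\mathbb C}$ does not commute with power series, so one should either compose with the natural comparison map $A[[t]]^{\otimes(n+1)}\longrightarrow (A^{\otimes(n+1)})[[t]]$ or, as you suggest, truncate to $A[t]/(t^{k+1})=A\otimes_{\mathbb C}\mathbb C[t]/(t^{k+1})$, where the identification is exact. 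Neither point is a gap; both approaches prove the theorem.
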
 
 
 \medskip
 Further, in \cite{Mirza}, Mirzavaziri has provided a characterization of normalized higher derivations on algebras over $\mathbb C$ from which it follows that if   $D=\{D_k\}_{k\geq 0}$ is a higher derivation on
 $A$, each $D_k$ is an element of the Hopf algebra $\mathcal H=\mathcal U(Der(A))$ (see \eqref{3.7} for details). It follows
 therefore from Theorem \ref{Thm1} that for each $k$, the element $D_k\in \mathcal H$ induces a 
 morphism $L_{D_k}^Y:\leftsub{q}{HH}_*^Y(A)\longrightarrow \leftsub{q}{HH}_*^Y(A)$. Then, in Section 3, we prove the following result. 
  
 \begin{Thm} Let $D=\{D_k\}_{k\geq 0}$ be a normalized higher derivation on $A$. Then, for each
 $k\geq 1$, we have $L^Y_{D_k}=L_D^{Y,k}$ as an endomorphism of $\leftsub{q}{HH}_*^Y(A)=\underset{n=0}{\overset{\infty}{\bigoplus}}\textrm{ }\leftsub{q}{HH}_n^Y(A)$. 
 \end{Thm}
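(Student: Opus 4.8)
The plan is to lift the asserted equality to the level of the $N$-complex $(\mathcal L^Y(A),\leftsub{q}{b})$, where both sides are realized by explicit endomorphisms, and then to pass to homology. I would first recall, from the constructions underlying Theorems \ref{Thm1} and \ref{Thm2}, the relevant chain-level operators. For a derivation $\delta\in Der(A)$ the morphism $L_\delta^{Y}$ is induced by the natural endomorphism $\mathcal E_\delta$ of $\mathcal L^Y(A)$ acting on $\mathcal L(A)([m])=A\otimes A^{\otimes m}$ by the Leibniz rule $\mathcal E_\delta(a_0\otimes\cdots\otimes a_m)=\sum_{i=0}^m a_0\otimes\cdots\otimes\delta(a_i)\otimes\cdots\otimes a_m$; the assignment $\delta\mapsto\mathcal E_\delta$ satisfies $\mathcal E_x\mathcal E_y-\mathcal E_y\mathcal E_x=\mathcal E_{[x,y]}$ and hence extends to an action of $\mathcal H=\mathcal U(Der(A))$, which is the $\mathcal H$-module structure of Theorem \ref{Thm1}; in particular a product $\delta_{j_1}\cdots\delta_{j_l}$ of derivations acts on $\leftsub{q}{HH}_*^Y(A)$ by $L_{\delta_{j_1}}^{Y}\circ\cdots\circ L_{\delta_{j_l}}^{Y}$. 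For the normalized higher derivation $D=\{D_n\}_{n\geq0}$, the morphism $L_D^{Y,k}$ of Theorem \ref{Thm2} is induced by the endomorphism $\mathcal D_k$ with $\mathcal D_k(a_0\otimes\cdots\otimes a_m)=\sum_{i_0+\cdots+i_m=k}D_{i_0}(a_0)\otimes\cdots\otimes D_{i_m}(a_m)$ --- equivalently, $\sum_{k\geq0}\mathcal D_k\,t^k$ is obtained by applying $\mathcal L^Y(-)$ to the $\mathbb C$-algebra homomorphism $\Phi(t)\colon A\to A[[t]]$, $a\mapsto\sum_n D_n(a)\,t^n$, and then multiplying out the formal parameters occurring in the several tensor slots. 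Each of $\mathcal E_\delta$ and $\mathcal D_k$ commutes with every face map $d_i^j$ (by the Leibniz rule, \resp by the multiplicativity of a higher derivation) and hence with $\leftsub{q}{b}_i=\sum_j q^j d_i^j$, so both are morphisms of the $N$-complex $\mathcal L^Y(A)$.

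Next I would invoke \eqref{3.7}: Mirzavaziri's characterization provides derivations $\delta_1,\delta_2,\dots$ on $A$ with $\sum_{k\geq0}D_k\,t^k=\exp\big(\sum_{j\geq1}\delta_j\,t^j\big)$ in $\mathrm{End}_{\mathbb C}(A)[[t]]$, \ie $D_k=\sum_{l\geq1}\frac{1}{l!}\sum_{j_1+\cdots+j_l=k,\ j_i\geq1}\delta_{j_1}\cdots\delta_{j_l}$ in $\mathcal H$ for every $k\geq1$. Together with the first paragraph this forces
\[
L^Y_{D_k}=\sum_{l\geq1}\frac{1}{l!}\sum_{\substack{j_1+\cdots+j_l=k\\ j_i\geq1}}L_{\delta_{j_1}}^{Y}\circ\cdots\circ L_{\delta_{j_l}}^{Y},
\]
and, since the induced map on the homology of an $N$-complex is functorial for composition of morphisms of $N$-complexes, the theorem is reduced to the chain-level identity $\mathcal D_k=\sum_{l\geq1}\frac{1}{l!}\sum_{j_1+\cdots+j_l=k,\ j_i\geq1}\mathcal E_{\delta_{j_1}}\circ\cdots\circ\mathcal E_{\delta_{j_l}}$. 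All operators in sight are natural transformations of $\mathcal L(A)$ on $\Gamma$ --- they are built from the algebra structure, and are compatible with insertions of the unit because $\delta_j(1)=0$ and $D_k(1)=0$ for $k\geq1$ --- and prolong to $Fin_*$ by colimits, so it is enough to verify this identity on $\mathcal L(A)([m])=A^{\otimes(m+1)}$ for every $m\geq0$.

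For this last step I would argue $t$-adically in $\mathrm{End}_{\mathbb C}(A^{\otimes(m+1)})[[t]]$ --- or, to avoid comparing completions with tensor products, modulo $t^{k+1}$, which suffices since only the coefficient of $t^k$ is needed. Writing $\mathcal P:=\sum_{j\geq1}\mathcal E_{\delta_j}\,t^j$ and $\partial:=\sum_{j\geq1}\delta_j\,t^j$, one has $\mathcal P=\sum_{s=0}^m 1^{\otimes s}\otimes\partial\otimes 1^{\otimes(m-s)}$, a sum of $m+1$ operators acting on disjoint tensor slots and therefore pairwise commuting; consequently $\exp(\mathcal P)=\prod_{s=0}^m\big(1^{\otimes s}\otimes\exp(\partial)\otimes 1^{\otimes(m-s)}\big)=\exp(\partial)^{\otimes(m+1)}$. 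The coefficient of $t^k$ of $\exp(\mathcal P)=\sum_{l\geq0}\frac{1}{l!}\big(\sum_{j\geq1}\mathcal E_{\delta_j}\,t^j\big)^l$ is $\sum_{l\geq1}\frac{1}{l!}\sum_{j_1+\cdots+j_l=k,\ j_i\geq1}\mathcal E_{\delta_{j_1}}\cdots\mathcal E_{\delta_{j_l}}$, while the coefficient of $t^k$ of $\exp(\partial)^{\otimes(m+1)}$, evaluated on $a_0\otimes\cdots\otimes a_m$ and using $\exp(\partial)|_A=\Phi(t)=\sum_n D_n\,t^n$, is $\sum_{i_0+\cdots+i_m=k}D_{i_0}(a_0)\otimes\cdots\otimes D_{i_m}(a_m)=\mathcal D_k(a_0\otimes\cdots\otimes a_m)$. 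This proves the chain-level identity, and passing to the homology of $(\mathcal L^Y(A),\leftsub{q}{b})$ gives $L_D^{Y,k}=L^Y_{D_k}$.

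The step that demands the most care --- and the only genuine obstacle --- is the first one: confirming that the two Lie derivatives are realized \emph{on the nose} by the operators $\mathcal D_k$ and $\sum_{l}\frac{1}{l!}\sum\mathcal E_{\delta_{j_1}}\cdots\mathcal E_{\delta_{j_l}}$, \ie that the constructions in Theorems \ref{Thm1} and \ref{Thm2} and the $\mathcal H$-action of Theorem \ref{Thm1} produce precisely these chain maps and not ones that merely agree with them up to chain homotopy. Once that identification is secured, the rest is the soft observation that the exponential of a sum of commuting derivations, one acting on each tensor factor, is the factor-wise multinomial action of the exponential, which --- in combination with Mirzavaziri's formula \eqref{3.7} --- completes the proof.
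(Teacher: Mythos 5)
Your argument is correct in substance but takes a genuinely different route from the paper at the decisive combinatorial step. Both proofs share the same skeleton: reduce to the chain level over $\Gamma$, prolong to $Fin_*$ by colimits, and identify the multinomial operator $\mathcal D_k$ underlying $L_D^{Y,k}$ with the chain-level operator realizing the action of $D_k\in\mathcal H=\mathcal U(Der(A))$. The paper carries out that identification entirely inside the Hopf-algebra formalism: it reduces the theorem to the coproduct identity $\Delta^n(D_k)=\sum_{\sum p_i=k}D_{p_0}\otimes\cdots\otimes D_{p_n}$ (its equation \eqref{3.9}) and proves this by induction on $k$ using Mirzavaziri's recursion \eqref{3.10}. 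You instead assemble the family $\{D_k\}$ into the algebra homomorphism $\Phi(t)=\sum_nD_nt^n$ and exponentiate a sum of commuting slot-wise derivations, replacing the induction by a one-line generating-function identity. Your version is cleaner and makes it transparent why $\mathcal D_k$ must appear; the paper's version stays within the machinery it has already built and produces \eqref{3.9} as a statement of independent interest about $\Delta^n$.

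One point genuinely needs repair. You attribute the identity $\sum_kD_kt^k=\exp\bigl(\sum_{j\geq1}\delta_jt^j\bigr)$ to \eqref{3.7}, but that is not what \eqref{3.7} says: Mirzavaziri's coefficients are $\prod_j\frac{1}{r_j+\cdots+r_i}$, not $\frac{1}{i!}$, and for non-commuting derivations the two expressions differ (already $d_3=3\delta_3+\tfrac12[\delta_1,\delta_2]$), so your $\delta_j$ are not the paper's $d_j$. The exponential form is nevertheless true in characteristic zero --- take $\sum_j\delta_jt^j:=\log\Phi(t)$, which is a derivation $A\to A[[t]]$ of positive $t$-adic order because $\Phi$ is an algebra homomorphism congruent to the identity modulo $t$ --- so the gap is closed by one sentence, but it must be closed. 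Having done so, you should also address the fact that the theorem's $L^Y_{D_k}$ is, in the paper, the action of the specific element of $\mathcal H$ given by \eqref{3.7}, whereas you have computed the action of a possibly different preimage of $D_k$ in $\mathcal H$; since the $\mathcal H$-action on $A^{\otimes(n+1)}$ need not factor through the image of $\mathcal H$ in $End(A)$, this is not automatic. The tidiest fix is to run your own generating-function argument once more with the paper's $d_j$: the recursion \eqref{3.10} says $\Phi'(t)=\bigl(\sum_jd_jt^{j-1}\bigr)\Phi(t)$ with $\Phi(0)=1$, the slot-wise operator $\sum_k\mathcal D_kt^k$ satisfies the corresponding equation on $A^{\otimes(m+1)}$ by multiplicativity, and so does the chain-level action of the \eqref{3.7} elements; uniqueness of solutions of such formal ODEs then identifies the two without ever invoking the exponential presentation.
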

 
 \medskip
 In Section 4, we start by defining bivariant $q$-Hochschild cohomology groups $\{
 \leftsub{q}{HH}^n_Y(A,A)\}_{n\in \mathbb Z}$ of order $Y$. For this we 
  consider  the modules $\underline{Hom}(\mathcal L^Y(A),\mathcal L^Y(A))_n$, $n\in 
  \mathbb Z$ where an element $f\in  \underline{Hom}(\mathcal L^Y(A),\mathcal L^Y(A))_n$ is 
  given by a family of morphisms $f=\{f_i:\mathcal L^Y(A)_i\longrightarrow 
  \mathcal L^Y(A)_{i+n}\}_{i\in \mathbb Z}$. Further, we define a differential 
  $\leftsub{q}{\partial}_n: \underline{Hom}(\mathcal L^Y(A),\mathcal L^Y(A))_n
  \longrightarrow \underline{Hom}(\mathcal L^Y(A),\mathcal L^Y(A))_{n-1}$ (see 
  Definition \ref{Df4.1}). Then, $(\underline{Hom}(\mathcal L^Y(A),\mathcal L^Y(A)),
  \leftsub{q}{\partial})$ is an $N$-complex and we let $\leftsub{q}{HH}^n_Y(A,A)$
  be the $(-n)$-th homology object of $(\underline{Hom}(\mathcal L^Y(A),\mathcal L^Y(A)),
  \leftsub{q}{\partial})$. We end with the following result. 
  
  \begin{Thm} Let $D:A\longrightarrow A$ be a derivation on $A$. Then, for each $n\in 
  \mathbb Z$, the derivation $D$ induces a morphism
  $\underline{L}_D^{Y,n}: \leftsub{q}{HH}_Y^n(A,A)\longrightarrow  \leftsub{q}{HH}_Y^n(A,A)
$ on the bivariant $q$-Hochschild cohomology groups of order $Y$. 
  \end{Thm}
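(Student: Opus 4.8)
The plan is to mimic, in the $N$-complex/bivariant setting, the construction that in Theorem~\ref{Thm1} produced the Lie derivative $L_D^{Y,n}$ on $\leftsub{q}{HH}_n^Y(A)$. A derivation $D:A\longrightarrow A$ induces, for each $i$, a linear endomorphism $\mathcal D_i$ of $\mathcal L^Y(A)_i$ — essentially the sum, over tensor factors, of $D$ applied in one slot — and the content of Theorem~\ref{Thm1} is that $\mathcal D=\{\mathcal D_i\}$ commutes with the $q$-Hochschild differential $\leftsub{q}{b}$ (or more precisely intertwines the face maps in a way compatible with the weights $q^j$), hence descends to homology. First I would write out the map on the $\underline{Hom}$-complex: for $f=\{f_i:\mathcal L^Y(A)_i\longrightarrow\mathcal L^Y(A)_{i+n}\}$ set
\begin{equation}
\underline{\mathcal D}(f)_i \;=\; \mathcal D_{i+n}\circ f_i \;-\; f_i\circ \mathcal D_i ,
\end{equation}
the usual graded commutator $[\mathcal D,f]$. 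This is visibly $\mathbb C$-linear in $f$ and preserves the internal degree $n$.

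Next I would check that $\underline{\mathcal D}$ is a chain map of $N$-complexes, i.e. that it commutes with $\leftsub{q}{\partial}$ (Definition~\ref{Df4.1}). Since $\leftsub{q}{\partial}_n(f)$ is built from pre- and post-composition of $f$ with $\leftsub{q}{b}$ (up to the signs/$q$-weights in the bivariant differential), the verification reduces to the single fact already available from Section~2, namely that $\mathcal D$ commutes with $\leftsub{q}{b}$: one expands $\leftsub{q}{\partial}(\underline{\mathcal D}f)$ and $\underline{\mathcal D}(\leftsub{q}{\partial}f)$ and the two agree termwise once $\mathcal D_\bullet \leftsub{q}{b}=\leftsub{q}{b}\,\mathcal D_\bullet$ is substituted — the cross terms cancel exactly as in the classical Cartan-homotopy bookkeeping. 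Because $\underline{\mathcal D}$ commutes with $\leftsub{q}{\partial}$, it sends $\ker$ to $\ker$ and (the appropriate sum of) images to images for the $N$-complex homology at level $p$, so it descends to an endomorphism $\underline{L}_D^{Y,n}$ of $\leftsub{q}{HH}^n_Y(A,A)$, the $(-n)$-th homology object. This is the desired morphism.

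The one genuine subtlety — and where I expect the real work to be — is the passage from ordinary ($2$-complex) homology to $N$-complex homology. For an $N$-complex the cycles at a given spot are $\ker(\leftsub{q}{\partial}^{\,r})$ and the boundaries are $\mathrm{im}(\leftsub{q}{\partial}^{\,N-r})$ for the various $r$, so I must make sure $\underline{\mathcal D}$ not only commutes with $\leftsub{q}{\partial}$ but, as a consequence, commutes with every power $\leftsub{q}{\partial}^{\,r}$ and hence preserves each of these subquotients; this is automatic from $\underline{\mathcal D}\,\leftsub{q}{\partial}=\leftsub{q}{\partial}\,\underline{\mathcal D}$, but it should be stated. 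A second point worth a line is well-definedness of $\mathcal D_i$ itself on $\mathcal L^Y(A)_i$: since $\mathcal L^Y(A)$ is obtained from the $\Gamma$-module $\mathcal L(A)$ by left Kan extension along $Y:\Delta^{op}\to Fin_*$ and then evaluating, one needs that the slot-wise action of $D$ is natural in $\Gamma$, which is exactly the functoriality already used to build $L_D^{Y,n}$ in Theorem~\ref{Thm1}; I would simply cite that. With these two observations in place the proof is a formal consequence of Theorem~\ref{Thm1} together with the explicit shape of $\leftsub{q}{\partial}$, and I would close by remarking that for $q=-1$, $N=2$ this recovers the classical Lie derivative on bivariant Hochschild cohomology.
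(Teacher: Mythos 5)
Your proposal is correct and follows essentially the same route as the paper: define the degree-zero commutator $f\mapsto [L_D^Y,f]$ on $\underline{Hom}(\mathcal L^Y(A),\mathcal L^Y(A))$, observe that it commutes with $\leftsub{q}{\partial}$ because $L_D^Y$ is a simplicial map and hence commutes with $\leftsub{q}{b}$, and then descend to the $N$-complex homology. The paper packages the commutation check as a slightly more general lemma (Lemma \ref{lem4.2}, for arbitrary degree $m$ with $q^{2m}=1$) and specializes to $m=0$, but the computation and the conclusion are the same as yours.
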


 \medskip
 \section{Lie Derivative on higher order $q$-Hochschild homology}

 \medskip
 Let $Vect$ denote the category of vector spaces over $\mathbb C$. Let $A$ be a   commutative
 $\mathbb C$-algebra.  We recall here the definition of higher order Hochschild homology groups of a commutative algebra
 $A$ as introduced by Pirashvili \cite{TP} (see also Loday \cite{Lod2}). Let $\Gamma$ denote the category whose objects are the pointed
 sets $[n]=\{0,1,2,...,n\}$ with $0\in [n]$ as base point for each $n\geq 0$. Then, a morphism $\phi :[m]\longrightarrow [n]$ 
 in $\Gamma$ is a map $\phi:\{0,1,2,...,m\}\longrightarrow \{0,1,2,...,n\}$ of sets such that $\phi(0)=0$.  We now define
 a functor:
 \begin{equation}\label{2.1}
\mathcal L(A):\Gamma\longrightarrow Vect \qquad [n]\mapsto A\otimes A^{\otimes n}
 \end{equation} Given a morphism $\phi:[m]\longrightarrow [n]$ in $\Gamma$, we have an induced map in 
 $Vect$:
 \begin{equation}\label{2.2}
 \begin{array}{c}
 \mathcal L(A)(\phi):A\otimes A^{\otimes m}\longrightarrow A\otimes A^{\otimes n} 
\\
\mathcal L(A)(\phi)(a_0\otimes a_1\otimes ....\otimes a_m) = (b_0\otimes b_1\otimes ...\otimes b_n) \qquad b_j:=\underset{\phi(i)=j}{\prod}a_i 
\\ \end{array}
 \end{equation} We now consider the category $Fin_*$ of finite pointed sets. There is a natural inclusion $
 \Gamma\hookrightarrow Fin_*$ of categories. Then, $\mathcal L(A):\Gamma
 \longrightarrow Vect$ can be extended to a functor $\mathcal L(A):Fin_*\longrightarrow Vect$ by setting:
 \begin{equation}\label{2.3}
 \mathcal L(A):Fin_*\longrightarrow Vect \qquad T\mapsto \underset{\Gamma\ni T'\longrightarrow T}{colim}\textrm{ }\mathcal L(A)(T') 
 \end{equation} where the colimit in \eqref{2.3} is taken over all morphisms $T'\longrightarrow T$ in $Fin_*$ such that
 $T'\in \Gamma$. Let $\Delta$ be the simplex category, i.e., the category whose objects are sets $[n]=\{0,1,2,...,n\}$, $n\geq 0$ and whose
 morphisms are order preserving maps. Then, given a pointed simplicial finite  set $Y$ corresponding to a functor $Y:\Delta^{op}
 \longrightarrow Fin_*$, we have a simplicial vector space $\mathcal L^Y(A)$ determined by the composition of functors:
 \begin{equation}\label{2.4}
 \begin{CD}
\mathcal L^Y(A): \Delta^{op}\overset{Y}{\longrightarrow}Fin_* @>\mathcal L(A)>> Vect \\
 \end{CD}
 \end{equation}   For any $n\geq 0$, let $HH_n^Y(A)$ denote the $n$-th homology group of the chain complex 
 associated to the simplicial vector space $\mathcal L^Y(A)$. Following Pirashvili \cite{TP}, when $Y=S^p$ ($S^p$ being the sphere of dimension $p\geq 1$), we say
 that the homology groups $\{HH_n^{S^p}(A)\}_{n\geq 0}$ are the Hochschild homology groups of $A$ of order
 $p$. When $p=1$, i.e., $Y=S^1$ is the simplicial circle, the Hochschild homology groups $\{HH_n^{S^1}(A)\}_{n\geq 0}$
 are identical to the usual Hochschild homology groups of $A$. 
 
 \medskip
Our objective is to introduce a $q$-analog of the groups $HH_*^Y(A)$, where $q\in \mathbb C$ is a primitive $N$-th root
of unity.  For this, we consider the  face maps $d_{n}^i:\mathcal L^Y(A)_n
\longrightarrow \mathcal L^Y(A)_{n-1}$, $0\leq i\leq n$, $n\geq 0$, of the simplicial vector space $\mathcal L^Y(A)$ defined
in \eqref{2.4}. We set:
\begin{equation}\label{2.5cxw}
\leftsub{q}{b}_n:\mathcal L^Y(A)_n\longrightarrow \mathcal L^Y(A)_{n-1}\qquad \leftsub{q}{b}_n:=\sum_{i=0}^nq^id_{n}^i
\end{equation} For the sake of convenience, we will often write $\leftsub{q}b_n$ simply as $\leftsub{q}{b}$.  Then, it is well known that the morphism $\leftsub{q}{b}$ satisfies $\leftsub{q}{b}^N=0$ (this is true in general for any simplicial vector space; see, for instance, Kapranov \cite[Proposition 0.2]{Kap}). In particular, if $q=-1$, i.e., $N=2$, we have $\leftsub{(-1)}{b}^2=0$ and $\leftsub{(-1)}{b}$ is the 
standard differential on the chain complex corresponding to the simplicial vector space
$\mathcal L^Y(A)$. In general, the pair $(\mathcal L^Y(A),\leftsub{q}{b})$, i.e., the simplicial vector space $\mathcal L^Y(A)$ equipped with
the morphism $\leftsub{q}{b}$ is an ``$N$-complex'' in the sense defined below. 

\medskip
\begin{defn}\label{Def00} (see \cite[$\S$ 2]{MDV1} and \cite[Definition 0.1]{Kap}) Let $\mathcal A$
be an abelian category and $N\geq 2$ a positive integer. An $N$-complex 
in $\mathcal A$ is a sequence of objects and morphisms of $\mathcal A$
\begin{equation}\label{2.6}
C_*=\{\dots \longrightarrow C_1\overset{b_1}{\longrightarrow} C_0\overset{b_0}{\longrightarrow} C_{-1}\longrightarrow \dots \}
\end{equation} such that the composition of any $N$ consecutive morphisms in \eqref{2.6}
is $0$. For any $n\in \mathbb Z$, the homology object $H_{\{n\}}(C_*,b)$ of the $N$-complex
$(C_*,b)$ is defined as:
\begin{equation}
H_{\{n\}}(C_*,b):=\underset{i=1}{\overset{N-1}{\bigoplus}} H_{\{i,n\}}(C_*,b) \qquad H_{\{i,n\}}(C_*,b):=\frac{Ker(b^i:C_n\longrightarrow C_{n-i})}{Im(b^{N-i}:C_{N-i+n}\longrightarrow C_n)}
\end{equation}

\end{defn}

\medskip
\begin{defn}\label{Def01} Let $A$ be a commutative algebra over $\mathbb C$ and let $Y$ be a pointed simplicial finite set. Let $q\in 
\mathbb C$ be a primitive $N$-th root of unity. Then, the $q$-Hochschild homology groups $\leftsub{q}{HH}_n^Y(A)$, $n\geq 0$ of $A$ of order $Y$ are defined to be the homology objects of 
the $N$-complex $(\mathcal L^Y(A),\leftsub{q}{b})$ associated to the simplicial vector space $\mathcal L^Y(A)$; in other words, we define:
\begin{equation}
\leftsub{q}{HH}_n^Y(A):=H_{\{n\}}(\mathcal L^Y(A),\leftsub{q}{b})
\end{equation}

\end{defn}

\medskip
As with the ordinary Hochschild homology of an algebra (see, for instance, \cite[$\S$ 4.1]{Lod}), given a derivation $D:A\longrightarrow A$,  we want to construct the Lie derivative
$L_D^Y:{HH}_*^Y(A)\longrightarrow {HH}_*^Y(A)$ on the Hochschild homology of
order $Y$. For this, we start with the following lemma.

\medskip
\begin{lem}\label{lem1} Let $A$ be a commutative $\mathbb C$-algebra and let $D:A\longrightarrow A$ be a derivation on $A$. Then, the derivation
$D$ induces an endomorphism $L_D:\mathcal L(A)\longrightarrow \mathcal L(A)$ of the functor 
$\mathcal L(A):Fin_*\longrightarrow Vect$.

\end{lem}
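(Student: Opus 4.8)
The plan is to define $L_D$ first on the subcategory $\Gamma$ by the evident Leibniz-type formula, to check that it is a natural transformation there, and then to transport it to $Fin_*$ along the colimit \eqref{2.3} defining $\mathcal L(A)$ on all finite pointed sets. On the object $[n]$ I would set
\[
L_D^{[n]}(a_0\otimes a_1\otimes\cdots\otimes a_n):=\sum_{i=0}^n a_0\otimes\cdots\otimes a_{i-1}\otimes D(a_i)\otimes a_{i+1}\otimes\cdots\otimes a_n .
\]
Since each summand is $\mathbb C$-multilinear in $(a_0,\ldots,a_n)$ and $D$ is $\mathbb C$-linear, this expression descends to a well-defined linear endomorphism of $\mathcal L(A)([n])=A\otimes A^{\otimes n}$. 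Observe that the basepoint slot $a_0$ is differentiated on the same footing as the others; this is forced by naturality, as the computation below shows.

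Next I would verify that the family $\{L_D^{[n]}\}_{n\geq 0}$ is natural, i.e. that for every morphism $\phi:[m]\to[n]$ in $\Gamma$ one has $\mathcal L(A)(\phi)\circ L_D^{[m]}=L_D^{[n]}\circ\mathcal L(A)(\phi)$. Unwinding \eqref{2.2}, the right-hand side applied to $a_0\otimes\cdots\otimes a_m$ is $\sum_{j=0}^n b_0\otimes\cdots\otimes D(b_j)\otimes\cdots\otimes b_n$ with $b_j=\prod_{\phi(i)=j}a_i$; since $D$ is a derivation on the commutative algebra $A$, the Leibniz rule (together with $D(1)=0$, so that empty products are handled correctly) gives
\[
D(b_j)=\sum_{k:\,\phi(k)=j}\Bigl(\prod_{i:\,\phi(i)=j,\ i\neq k}a_i\Bigr)\cdot D(a_k).
\]
Summing these contributions over all $j$ and all $k$ with $\phi(k)=j$ — that is, over all $k\in\{0,\ldots,m\}$ — one obtains precisely $\mathcal L(A)(\phi)$ applied to $\sum_{k=0}^m a_0\otimes\cdots\otimes D(a_k)\otimes\cdots\otimes a_m$, which is the left-hand side. (Here one uses $\phi(0)=0$, so the $k=0$ term lands in the basepoint slot on both sides.) Hence $L_D:\mathcal L(A)\Rightarrow\mathcal L(A)$ is an endomorphism of functors on $\Gamma$.

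Finally, to pass to $Fin_*$: by \eqref{2.3}, for $T\in Fin_*$ the space $\mathcal L(A)(T)$ is the colimit of $\mathcal L(A)(T')$ over all morphisms $T'\to T$ in $Fin_*$ with $T'\in\Gamma$, and the transition maps of this diagram are of the form $\mathcal L(A)(\psi)$ for morphisms $\psi$ of $\Gamma$. Because $L_D$ commutes with every such $\mathcal L(A)(\psi)$ by the naturality just established, the maps $L_D^{T'}$ assemble into a single map $L_D^{T}:\mathcal L(A)(T)\to\mathcal L(A)(T)$ on the colimit; equivalently, $L_D$ on $Fin_*$ is the left Kan extension along $\Gamma\hookrightarrow Fin_*$ of the natural transformation built above. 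Naturality of $\{L_D^{T}\}_{T\in Fin_*}$ with respect to an arbitrary morphism in $Fin_*$ then follows from the universal property of these colimits together with naturality over $\Gamma$. I expect the only genuine work to be the index bookkeeping in the naturality check over $\Gamma$, where the derivation property of $D$ is exactly what makes the two sides agree; the prolongation to $Fin_*$ is formal.
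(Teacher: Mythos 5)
Your proposal is correct and follows essentially the same route as the paper's proof: the same Leibniz-type formula on $\Gamma$ (differentiating the basepoint slot along with the others), the same naturality computation expanding $D\bigl(\prod_{\phi(i)=j}a_i\bigr)$ via the Leibniz rule and re-indexing the double sum over $j$ and $k\in\phi^{-1}(j)$ as a single sum over $k\in[m]$, and the same prolongation to $Fin_*$ by the colimit defining $\mathcal L(A)(T)$. Your remark that $D(1)=0$ handles empty fibres $\phi^{-1}(j)=\emptyset$ is a small detail the paper leaves implicit.
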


\begin{proof} We first consider the functor $\mathcal L(A)$ restricted to the subcategory $\Gamma$ of
$Fin_*$, defined as in \eqref{2.1} and \eqref{2.2}:
\begin{equation}
\mathcal L(A):\Gamma\longrightarrow Vect \qquad [n]\mapsto A\otimes A^{\otimes n}
 \end{equation} Given the derivation $D$ on $A$, we define morphisms (for all $n\geq 0$):
\begin{equation}\label{2.7cq}
L_D([n]):\mathcal L(A)([n])\longrightarrow \mathcal L(A)([n])\qquad (a_0\otimes a_1\otimes ...\otimes a_n)\mapsto \sum_{i=0}^n(a_0\otimes a_1\otimes...\otimes D(a_i)\otimes...\otimes a_n)
\end{equation} Further, for any morphism $\phi:[m]\longrightarrow [n]$ in $\Gamma$, we have, for any
$(a_0\otimes a_1\otimes ... \otimes a_m)\in A\otimes A^{\otimes m}$:
\begin{equation*}
\begin{array}{ll}
L_D([n])\circ \mathcal L(A)(\phi)(a_0\otimes a_1\otimes ... \otimes a_m)  &= L_D([n])\left(\underset{j=0}{\overset{n}{\bigotimes}}
\textrm{ }\underset{\phi(i)=j}{\prod}a_i\right)
\\ 
&= \underset{k=0}{\overset{n}{\sum}}\left(\underset{j=0}{\overset{k-1}{\bigotimes}}
\textrm{ }\underset{\phi(i)=j}{\prod}a_i\right)\otimes D\left(\underset{\phi(i)=k}{\prod} a_i\right )\otimes \left(\underset{j=k+1}{\overset{n}{\bigotimes}}
\textrm{ }\underset{\phi(i)=j}{\prod}a_i\right)
\\
&= \underset{k=0}{\overset{n}{\sum}}\textrm{ } \underset{i\in \phi^{-1}(k)}{\sum} \mathcal L(A)(\phi)(a_0\otimes ...\otimes
D(a_i)\otimes ... \otimes a_m)\\
& =\underset{i=0}{\overset{m}{\sum}}
\mathcal L(A)(\phi)(a_0\otimes ...\otimes
D(a_i)\otimes ... \otimes a_m)\\
& =\mathcal L(A)(\phi)\circ L_D([m])(a_0\otimes a_1\otimes ... \otimes a_m) \\
\end{array}
\end{equation*} It follows that the derivation $D$ induces an endomorphism $L_D$ of the functor $\mathcal L(A):\Gamma
\longrightarrow Vect$. More generally, for any object $T\in Fin_*$ and a morphism $T'\longrightarrow T$ in $Fin_*$ such that
$T'\in \Gamma$, we have a morphism $L_D(T'):\mathcal L(A)(T')\longrightarrow \mathcal L(A)(T')$ as defined in 
\eqref{2.7cq}. By definition, we know that $\mathcal L(A)(T)=\underset{\Gamma\ni T'\longrightarrow T}{colim}
\mathcal L(A)(T')$ and hence we have an induced morphism
\begin{equation}\label{2.8cq}
L_D(T):\mathcal L(A)(T)=\underset{\Gamma\ni T'\longrightarrow T}{colim}
\mathcal L(A)(T')\longrightarrow \mathcal L(A)(T)=\underset{\Gamma\ni T'\longrightarrow T}{colim}
\mathcal L(A)(T')
\end{equation} From \eqref{2.8cq} it follows that the derivation $D$ induces an endomorphism $L_D:
\mathcal L(A)\longrightarrow \mathcal L(A)$ of the functor $\mathcal L(A):Fin_*
\longrightarrow Vect$. This proves the claim. 
 
\end{proof}

\medskip

\begin{thm}\label{prop2.4r} Let $A$ be a commutative $\mathbb C$-algebra and let $D:A\longrightarrow A$ be a derivation on $A$. Let $Y$ be a pointed
simplicial finite set. Then, for each
$n\geq 0$, the derivation $D$ induces a morphism $L_D^{Y,n}:\leftsub{q}{HH}_n^Y(A)\longrightarrow 
\leftsub{q}{HH}_n^Y(A)$ of $q$-Hochschild
homology groups of order $Y$, where $q\in\mathbb C$ is a primitive $N$-th root of unity. 

\end{thm}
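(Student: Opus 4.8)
The plan is to obtain the morphism simply by transporting the endomorphism of Lemma \ref{lem1} along the functor $Y$ and checking that it is compatible with the $q$-Hochschild differentials, so that it passes to homology. First I would take the endomorphism $L_D:\mathcal L(A)\longrightarrow \mathcal L(A)$ of the functor $\mathcal L(A):Fin_*\longrightarrow Vect$ furnished by Lemma \ref{lem1}; this is a natural transformation. Precomposing (whiskering) with the functor $Y:\Delta^{op}\longrightarrow Fin_*$ that defines the pointed simplicial finite set $Y$ produces a natural transformation
\[
L_D^Y:\mathcal L^Y(A)\longrightarrow \mathcal L^Y(A)
\]
of functors $\Delta^{op}\longrightarrow Vect$, that is, a morphism of simplicial vector spaces, given in degree $n$ by the map $L_D(Y([n])):\mathcal L^Y(A)_n\longrightarrow \mathcal L^Y(A)_n$.

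Next I would use that $L_D^Y$, being a morphism of simplicial vector spaces, commutes with all the face maps $d_n^i:\mathcal L^Y(A)_n\longrightarrow \mathcal L^Y(A)_{n-1}$, and therefore with each $q$-Hochschild differential $\leftsub{q}{b}_n=\sum_{i=0}^n q^i d_n^i$; indeed $L_D^Y\circ \leftsub{q}{b}_n=\sum_{i=0}^n q^i\,(L_D^Y\circ d_n^i)=\sum_{i=0}^n q^i\,(d_n^i\circ L_D^Y)=\leftsub{q}{b}_n\circ L_D^Y$. Hence $L_D^Y$ is an endomorphism of the $N$-complex $(\mathcal L^Y(A),\leftsub{q}{b})$. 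It then remains only to record the routine fact that an endomorphism $f$ of an $N$-complex with $f\circ b=b\circ f$ satisfies $f\circ b^i=b^i\circ f$ for all $i\geq 1$, so $f$ carries $Ker(b^i:C_n\to C_{n-i})$ into itself and $Im(b^{N-i}:C_{N-i+n}\to C_n)$ into itself, and thus descends to an endomorphism of each $H_{\{i,n\}}(\mathcal L^Y(A),\leftsub{q}{b})$ and hence of the direct sum $H_{\{n\}}(\mathcal L^Y(A),\leftsub{q}{b})=\leftsub{q}{HH}_n^Y(A)$. Defining $L_D^{Y,n}$ to be this induced endomorphism completes the proof.

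The argument is essentially formal once Lemma \ref{lem1} is available, so I do not expect a real obstacle here; the one point deserving a moment's care is that the colimit extension in Lemma \ref{lem1} genuinely yields a natural transformation over all of $Fin_*$ (not merely over the subcategory $\Gamma$), which is exactly what makes the whiskering with $Y$ legitimate — but that is precisely the assertion of Lemma \ref{lem1}. (The same reasoning, applied to an arbitrary element of $\mathcal H=\mathcal U(Der(A))$ viewed as a composite of derivations, also yields the $\mathcal H$-module structure asserted in Theorem \ref{Thm1}.)
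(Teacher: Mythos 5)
Your proposal is correct and follows exactly the paper's argument: whisker the natural transformation $L_D$ from Lemma \ref{lem1} with $Y$ to get a simplicial endomorphism $L_D^Y$ of $\mathcal L^Y(A)$, then pass to the homology of the $N$-complex $(\mathcal L^Y(A),\leftsub{q}{b})$. You merely make explicit the step the paper leaves implicit, namely that commuting with the face maps forces commutation with $\leftsub{q}{b}$ and hence with all its powers, so the map preserves the kernels and images defining $H_{\{i,n\}}$.
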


\begin{proof} From Lemma \ref{lem1}, we know that the derivation $D$ induces an endomorphism
$L_D:\mathcal L(A)\longrightarrow \mathcal L(A)$ of the functor $\mathcal L(A):
Fin_*\longrightarrow Vect$. Given the  pointed simplicial finite set $Y$, the endomorphism $L_D:\mathcal L(A)
\longrightarrow \mathcal L(A)$ of functors induces an endomorphism of the functor
\begin{equation}\label{2.9cq}
\begin{CD}
\mathcal L^Y(A):\Delta^{op}\overset{Y}{\longrightarrow}Fin_* @>\mathcal L(A)>> Vect \\
\end{CD}
\end{equation} From \eqref{2.9cq}, it follows  that we have an endomorphism $L_D^Y:
\mathcal L^Y(A)\longrightarrow \mathcal L^Y(A)$ of the simplicial vector space
$\mathcal L^Y(A)$. Hence, we have induced morphisms $L_D^{Y,n}:\leftsub{q}{HH}_n^Y(A)
\longrightarrow \leftsub{q}{HH}_n^Y(A)$ on the homology objects of the $N$-complex $(\mathcal L^Y(A),\leftsub{q}{b})$ associated
to the simplicial vector space $\mathcal L^Y(A)$ as in \eqref{2.5cxw}. 
\end{proof}

\medskip
We now let $Der(A)$ denote the vector space of all derivations on the commutative $\mathbb C$-algebra $A$. Then, 
$Der(A)$ is a Lie algebra, endowed with the Lie bracket $[D,D']:=D\circ D' - D'\circ D$, $\forall$ $D$, $D'\in Der(A)$. Let 
$\mathcal H:=\mathcal U(Der(A))$ denote the universal enveloping algebra of $Der(A)$. We will now show that for any
pointed simplicial finite set $Y$, the operators
$L_D^{Y,n}$, $D\in Der(A)$ on the $q$-Hochschild homology group of $A$ of order $Y$ make $\leftsub{q}{HH}^Y_n(A)$ into a module
over the Hopf algebra $\mathcal H=\mathcal U(Der(A))$. 

\medskip

\begin{lem}\label{lemKS}  Let $q\in \mathbb C$ be a primitive $N$-th root of unity. Let $A$ be a commutative $\mathbb C$-algebra and let $D$, $D'\in Der(A)$ be derivations on $A$. Let $Y$ be a
pointed simplicial finite set. Then, for each $n\geq 0$, the operators $L_D^{Y,n}$, $L_{D'}^{Y,n}:\leftsub{q}{HH}^Y_n(A)\longrightarrow 
\leftsub{q}{HH}^Y_n(A)$ satisfy $[L_D^{Y,n}, L_{D'}^{Y,n}]=L^{Y,n}_D\circ L^{Y,n}_{D'} - L^{Y,n}_{D'}\circ L^{Y,n}_D=L^{Y,n}_{[D,D']}$. 
\end{lem}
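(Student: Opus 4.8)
The plan is to reduce the statement about homology to a statement about the chain-level maps, i.e. to prove the corresponding identity already on the $N$-complex $(\mathcal L^Y(A),\leftsub{q}{b})$. Recall from the proof of Proposition~\ref{prop2.4r} that $D$ induces an endomorphism $L_D^Y:\mathcal L^Y(A)\longrightarrow \mathcal L^Y(A)$ of the simplicial vector space $\mathcal L^Y(A)$, and that $L_D^{Y,n}$ is precisely the map induced on homology. Therefore it suffices to show that $L_D^Y\circ L_{D'}^Y - L_{D'}^Y\circ L_D^Y = L_{[D,D']}^Y$ as endomorphisms of the simplicial vector space $\mathcal L^Y(A)$; passing to homology then gives the claim on each $\leftsub{q}{HH}_n^Y(A)$.

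First I would check this identity for the functor $\mathcal L(A):\Gamma\longrightarrow Vect$ on objects $[n]$, using the explicit formula \eqref{2.7cq} for $L_D([n])$. On a generator $a_0\otimes\cdots\otimes a_n$, both $L_D([n])\circ L_{D'}([n])$ and $L_{D'}([n])\circ L_D([n])$ expand into a double sum: a ``diagonal'' part where $D$ and $D'$ act on the same tensor slot $a_i$ (giving $D D'(a_i)$, resp. $D' D(a_i)$, in slot $i$), and an ``off-diagonal'' part where they act on distinct slots $a_i$ and $a_{i'}$. The off-diagonal parts of the two compositions agree term by term, so they cancel in the commutator, leaving exactly $\sum_{i=0}^n a_0\otimes\cdots\otimes (DD' - D'D)(a_i)\otimes\cdots\otimes a_n = L_{[D,D']}([n])(a_0\otimes\cdots\otimes a_n)$, since $[D,D']=D\circ D' - D'\circ D$ is again a derivation on $A$ by Lemma~\ref{lem1}'s hypothesis setup. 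This establishes the identity for the restriction of these endomorphisms to $\Gamma$.

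Next I would propagate the identity from $\Gamma$ to $Fin_*$, and then to $\mathcal L^Y(A)$, by exactly the same colimit and composition-of-functors argument used in Lemma~\ref{lem1} and Proposition~\ref{prop2.4r}. Concretely: for $T\in Fin_*$, the maps $L_D(T),L_{D'}(T),L_{[D,D']}(T)$ are defined via \eqref{2.8cq} as the colimit over $\Gamma\ni T'\to T$ of the corresponding maps on $\mathcal L(A)(T')$; since the identity $L_D(T')\circ L_{D'}(T') - L_{D'}(T')\circ L_D(T') = L_{[D,D']}(T')$ holds for each $T'\in\Gamma$ and is compatible with the structure maps of the colimit system, it passes to the colimit. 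Composing with $Y:\Delta^{op}\to Fin_*$ then gives the identity for $L_D^Y,L_{D'}^Y,L_{[D,D']}^Y$ on $\mathcal L^Y(A)$, degree-wise and compatibly with faces. Finally, applying the homology functor $H_{\{n\}}(-,\leftsub{q}{b})$ of Definition~\ref{Def00} — which is functorial in maps of $N$-complexes — yields $[L_D^{Y,n},L_{D'}^{Y,n}] = L_{[D,D']}^{Y,n}$ on $\leftsub{q}{HH}_n^Y(A)$, as desired.

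The only genuinely computational step is the bookkeeping in the second paragraph: verifying that the off-diagonal terms of the two double sums match up and cancel. The main point to be careful about is that when $i = i'$ in the expansion, one does \emph{not} get a well-defined ``off-diagonal'' term, so those indices are collected into the diagonal part, and it is precisely the diagonal part that survives; everything else is a routine index-chasing verification requiring no new ideas. I expect no serious obstacle beyond this, since all the transfer machinery (colimits, composition with $Y$, functoriality of $N$-complex homology) is already in place from the preceding results.
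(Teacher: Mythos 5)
Your proposal is correct and follows essentially the same route as the paper: verify the commutator identity at the level of the functor $\mathcal L(A)$ on $\Gamma$ using the explicit formula \eqref{2.7cq}, extend to $Fin_*$ by colimits, compose with $Y$, and pass to the homology of the $N$-complex $(\mathcal L^Y(A),\leftsub{q}{b})$. The only difference is that you spell out the diagonal/off-diagonal cancellation that the paper dismisses as ``easily verified,'' which is a correct and welcome elaboration.
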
 

\begin{proof} For $D$, $D'\in Der(A)$, we consider the respective endomorphisms $L_D$, $L_{D'}$ of the functor
$\mathcal L(A):\Gamma\longrightarrow Vect$. By definition, for any object $[n]\in \Gamma$, we have morphisms:
\begin{equation}\label{2.9KS}
\begin{array}{c}
L_D([n]):\mathcal L(A)([n])\longrightarrow \mathcal L(A)([n])\qquad (a_0\otimes  ...\otimes a_n)\mapsto \underset{i=0}{\overset{n}{\sum}}(a_0\otimes...\otimes D(a_i)\otimes...\otimes a_n) \\
L_{D'}([n]):\mathcal L(A)([n])\longrightarrow \mathcal L(A)([n])\qquad (a_0\otimes ...\otimes a_n)\mapsto \underset{i=0}{\overset{n}{\sum}}(a_0\otimes...\otimes D'(a_i)\otimes...\otimes a_n) \\
\end{array}
\end{equation} From \eqref{2.9KS}, it may be verified easily that we have
\begin{equation}
(L_D\circ L_{D'}- L_{D'}\circ L_D)([n])=L_{[D,D']}([n]):\mathcal L(A)([n])\longrightarrow \mathcal L(A)([n])\qquad \forall \textrm{ } n\geq 0
\end{equation}  and it follows that $L_D\circ L_{D'}- L_{D'}\circ L_D=L_{[D,D']}$ as endomorphisms of the functor
$\mathcal L(A):\Gamma\longrightarrow Vect$. More generally, for any object $T\in Fin_*$, we have 
$\mathcal L(A)(T)=\underset{\Gamma\ni T'\longrightarrow T}{colim}\textrm{ }\mathcal L(A)(T')$ and hence
$L_D\circ L_{D'}- L_{D'}\circ L_D=L_{[D,D']}$ as endomorphisms of the functor
$\mathcal L(A):Fin_* \longrightarrow Vect$. Finally, considering the composition of $\mathcal L(A):Fin_*
\longrightarrow Vect$  with the functor $Y:\Delta^{op}\longrightarrow Fin_*$ corresponding
to the pointed simplicial finite set $Y$, it follows that $L_D^Y\circ L_{D'}^Y- L_{D'}^Y\circ L_D^Y=L^Y_{[D,D']}$ as endomorphisms of the functor $\mathcal L^Y(A):\Delta^{op}\longrightarrow Vect$. Hence, we have 
$[L_D^{Y,n}, L_{D'}^{Y,n}]=L^{Y,n}_D\circ L^{Y,n}_{D'} - L^{Y,n}_{D'}\circ L^{Y,n}_D=L^{Y,n}_{[D,D']}$ on the homology objects
$\leftsub{q}{HH}_n^Y(A)$, $n\geq 0$ of the $N$-complex $(\mathcal L^Y(A),\leftsub{q}{b})$ associated to the simplicial vector space
$\mathcal L^Y(A):\Delta^{op}\longrightarrow Vect$  as in \eqref{2.5cxw}. 
\end{proof}

\medskip
\begin{thm}\label{prop2.6}  Let $q\in \mathbb C$ be a primitive $N$-th root of unity.  Let $A$ be a commutative algebra over $\mathbb C$ and let $Der(A)$ denote the Lie algebra of derivations
on $A$. Let $\mathcal H=\mathcal U(Der(A))$ denote the universal enveloping algebra of $Der(A)$. Then, for any
pointed simplicial finite set $Y$ and any $n\geq 0$, the $q$-Hochschild homology group $\leftsub{q}{HH}_n^Y(A)$ of order $Y$ is a left module over
the Hopf algebra $\mathcal H$.
\end{thm}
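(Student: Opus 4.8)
The plan is to produce a homomorphism of unital associative $\mathbb C$-algebras $\rho_n\colon \mathcal H=\mathcal U(Der(A))\longrightarrow \mathrm{End}_{\mathbb C}(\leftsub{q}{HH}_n^Y(A))$, since such a homomorphism is exactly the datum of a left $\mathcal H$-module structure on $\leftsub{q}{HH}_n^Y(A)$ (one then sets $L_h^{Y,n}:=\rho_n(h)$ for $h\in\mathcal H$, recovering $L_D^{Y,n}$ for $D\in Der(A)$). By Theorem \ref{prop2.4r} we already have, for each $D\in Der(A)$, an element $L_D^{Y,n}\in \mathrm{End}_{\mathbb C}(\leftsub{q}{HH}_n^Y(A))$. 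So it suffices to check that $D\mapsto L_D^{Y,n}$ is a morphism of Lie algebras from $Der(A)$ to $\mathrm{End}_{\mathbb C}(\leftsub{q}{HH}_n^Y(A))$ viewed as a Lie algebra under the commutator bracket, and then to invoke the universal property of $\mathcal H=\mathcal U(Der(A))$.

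First I would record the $\mathbb C$-linearity of $D\mapsto L_D^{Y,n}$. This is immediate from the explicit formula \eqref{2.7cq} for $L_D([n])$ on $\mathcal L(A)([n])=A\otimes A^{\otimes n}$: the expression $\sum_i a_0\otimes\cdots\otimes D(a_i)\otimes\cdots\otimes a_n$ is additive in $D$ and commutes with scalars, so $L_{D+D'}([n])=L_D([n])+L_{D'}([n])$ and $L_{\lambda D}([n])=\lambda L_D([n])$ for every $[n]\in\Gamma$. These identities are preserved by the colimit \eqref{2.3} defining $\mathcal L(A)$ on $Fin_*$, by composition with $Y\colon\Delta^{op}\longrightarrow Fin_*$, and finally on passing to the homology objects of the $N$-complex $(\mathcal L^Y(A),\leftsub{q}{b})$; hence $D\mapsto L_D^{Y,n}$ is $\mathbb C$-linear.

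Next, Lemma \ref{lemKS} supplies exactly the remaining compatibility, namely $[L_D^{Y,n},L_{D'}^{Y,n}]=L_D^{Y,n}\circ L_{D'}^{Y,n}-L_{D'}^{Y,n}\circ L_D^{Y,n}=L_{[D,D']}^{Y,n}$ for all $D,D'\in Der(A)$. Together with the linearity established in the previous step, this shows that $D\mapsto L_D^{Y,n}$ is a morphism of Lie algebras from $Der(A)$ into the commutator Lie algebra underlying the associative $\mathbb C$-algebra $\mathrm{End}_{\mathbb C}(\leftsub{q}{HH}_n^Y(A))$. By the universal property of the universal enveloping algebra, it extends uniquely to a homomorphism of unital associative $\mathbb C$-algebras $\rho_n\colon \mathcal H\longrightarrow \mathrm{End}_{\mathbb C}(\leftsub{q}{HH}_n^Y(A))$, which is the desired left $\mathcal H$-module structure.

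I do not expect a genuine obstacle here, since the Lie-algebraic content is already contained in Lemma \ref{lemKS}; the only routine points are the linearity bookkeeping above and the appeal to the universal property of $\mathcal U(Der(A))$. If desired, one may add the remark that, $\mathcal U(Der(A))$ being a cocommutative Hopf algebra, this module structure interacts well with tensor products, though that refinement is not needed for the statement itself.
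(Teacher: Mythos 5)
Your proposal is correct and follows essentially the same route as the paper: the paper's proof likewise invokes Lemma \ref{lemKS} to obtain the Lie algebra action $D\mapsto L_D^{Y,n}$ and then appeals to the universal property of $\mathcal U(Der(A))$ to get the left $\mathcal H$-module structure. The only difference is that you spell out the $\mathbb C$-linearity of $D\mapsto L_D^{Y,n}$, which the paper leaves implicit.
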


\begin{proof} From Lemma \ref{lemKS}, it follows that   $Der(A)$ has a Lie algebra action on each $\leftsub{q}{HH}_n^Y(A)$, i.e., $[L_D^{Y,n}, L_{D'}^{Y,n}]=L^{Y,n}_D\circ L^{Y,n}_{D'} - L^{Y,n}_{D'}\circ L^{Y,n}_D=L^{Y,n}_{[D,D']}$ for any
$D$, $D'\in Der(A)$. Since $\mathcal H$ is the universal enveloping algebra of $Der(A)$, it follows that this Lie
algebra action of $Der(A)$ on $\leftsub{q}{HH}_n^Y(A)$  makes $\leftsub{q}{HH}^Y_n(A)$ into a left $\mathcal H$-module.

\end{proof}

\medskip
\section{Higher derivations and the Lie derivative}

\medskip
As before, we work with a commutative algebra $A$ over $\mathbb C$, a pointed simplicial finite set $Y$ and $q\in \mathbb C$ a primitive $N$-th root of unity. In this section, we will describe the Lie derivative on the $q$-Hochschild homology groups $\leftsub{q}{HH}_*^Y(A)$ corresponding to a higher derivation $D$ on $A$. Given an ordinary derivation $d$ on $A$, it is 
easy to verify that the sequence $\{D_n:=d^n/n!\}_{n\geq 0}$ satisfies the following identity:
\begin{equation}
D_n(a\cdot a') =\sum_{i=0}^nD_i(a)\cdot D_{n-i}(a') \qquad \forall\textrm{ } n\geq 0, \textrm{ }a,a'\in A
\end{equation} More generally, we have the notion of a higher (or Hasse-Schmidt) derivation on $A$. 

\medskip
\begin{defn} (see, for instance, \cite{Mat86}) Let $A$ be a commutative algebra over $\mathbb C$. A sequence $D=\{D_n 
\}_{n\geq 0}$ of $\mathbb C$-linear maps on $A$ is said to be a higher (or Hasse-Schmidt) derivation on $A$ if it satisfies:
\begin{equation}\label{3.2}
D_n(a\cdot a') =\sum_{i=0}^nD_i(a)\cdot D_{n-i}(a') \qquad \forall\textrm{ } n\geq 0, \textrm{ }a,a'\in A
\end{equation}

\end{defn}

\medskip
In this paper, we will only work with higher derivations $D=\{D_n\}_{n\geq 0}$ that are normalized, i.e., those higher
derivations $D=\{D_n\}_{n\geq 0}$ which satisfy $D_0=1$. For a normalized higher derivation $D=\{D_n\}_{
n\geq 0}$ it is easy to verify from relation \eqref{3.2} that $D_n(1)=0$ for all $n>0$. 
For more on the structure of higher derivations on an algebra, we refer the reader to \cite{Mirza}, \cite{RoyS} and 
\cite{RMS}. For a higher derivation on $A$, we have already described in \cite{AB} the corresponding Lie derivative on the ordinary Hochschild homology; we are now ready to introduce the action of a higher derivation
on the $q$-Hochschild homology groups of order $Y$ of the algebra $A$. 

\medskip
\begin{lem}\label{lem3.3} Let $A$ be a commutative algebra over $\mathbb C$ and let $D=\{D_n\}_{n\geq 0}$ be a (normalized) higher
derivation on $A$. Then, for any given $k\geq 0$, the higher derivation $D$ induces an  endomorphism $L_D^{k}:
\mathcal L(A)
\longrightarrow \mathcal L(A)$ of the functor $\mathcal L(A):Fin_*\longrightarrow Vect$.
\end{lem}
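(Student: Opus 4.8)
The plan is to mimic the proof of Lemma~\ref{lem1}, replacing the single-slot action of a derivation by a ``distributed'' action that spreads the total order $k$ across the tensor factors. Concretely, on an object $[n]\in\Gamma$ I would set
\begin{equation*}
L_D^k([n]):\mathcal L(A)([n])\longrightarrow\mathcal L(A)([n]),\qquad
a_0\otimes\cdots\otimes a_n\longmapsto\sum_{k_0+k_1+\cdots+k_n=k}D_{k_0}(a_0)\otimes D_{k_1}(a_1)\otimes\cdots\otimes D_{k_n}(a_n),
\end{equation*}
the sum running over all $(n+1)$-tuples of non-negative integers with total $k$. Since $D$ is normalized, any term with $k_i=0$ simply reproduces $a_i$ in the $i$-th slot; in particular $L_D^0([n])$ is the identity, and each $L_D^k([n])$ is a well-defined linear endomorphism. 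Conceptually, $L_D^k([n])$ is the coefficient of $t^k$ in $\mathbf D(t)^{\otimes(n+1)}$, where $\mathbf D(t)=\sum_{\ell\geq 0}D_\ell t^\ell:A\longrightarrow A[[t]]$ is the algebra homomorphism attached to the normalized higher derivation $D$.

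The heart of the argument is to check that the maps $L_D^k([n])$ assemble into an endomorphism of the functor $\mathcal L(A):\Gamma\longrightarrow Vect$, i.e.\ that $L_D^k([n])\circ\mathcal L(A)(\phi)=\mathcal L(A)(\phi)\circ L_D^k([m])$ for every morphism $\phi:[m]\longrightarrow[n]$ in $\Gamma$. Writing $\mathcal L(A)(\phi)(a_0\otimes\cdots\otimes a_m)=b_0\otimes\cdots\otimes b_n$ with $b_j=\prod_{\phi(i)=j}a_i$, applying $L_D^k([n])$ yields $\sum_{\ell_0+\cdots+\ell_n=k}D_{\ell_0}(b_0)\otimes\cdots\otimes D_{\ell_n}(b_n)$. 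I would then invoke the iterated form of the Hasse-Schmidt relation~\eqref{3.2}, namely $D_\ell\bigl(\prod_{i\in S}a_i\bigr)=\sum_{\sum_{i\in S}k_i=\ell}\prod_{i\in S}D_{k_i}(a_i)$ for any finite index set $S$ (for $S=\emptyset$ this reads $D_0(1)=1$ and $D_\ell(1)=0$ for $\ell>0$, which is where normalization enters), to expand each factor $D_{\ell_j}(b_j)$. Reorganizing the resulting double sum --- first over all tuples $(k_i)_{i=0}^m$ with total $k$, then grouping by the partial sums $\ell_j=\sum_{\phi(i)=j}k_i$ --- identifies it with $\mathcal L(A)(\phi)$ applied to $\sum_{k_0+\cdots+k_m=k}D_{k_0}(a_0)\otimes\cdots\otimes D_{k_m}(a_m)$, i.e.\ with $\mathcal L(A)(\phi)\circ L_D^k([m])(a_0\otimes\cdots\otimes a_m)$. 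This rearrangement is just the ``associativity'' of the multinomial decomposition (equivalently, the fact that $\mathbf D(t)$ respects products), and it is the only genuinely combinatorial point in the proof.

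Having an endomorphism of $\mathcal L(A):\Gamma\longrightarrow Vect$, I would extend it to all of $Fin_*$ exactly as in the proof of Lemma~\ref{lem1}: for each $T\in Fin_*$ the maps $L_D^k(T')$, $\Gamma\ni T'\to T$, are compatible with the structure morphisms of the colimit $\mathcal L(A)(T)=\underset{\Gamma\ni T'\to T}{colim}\ \mathcal L(A)(T')$ and hence induce a morphism $L_D^k(T):\mathcal L(A)(T)\longrightarrow\mathcal L(A)(T)$, and these are natural in $T$. This produces the desired endomorphism $L_D^k:\mathcal L(A)\longrightarrow\mathcal L(A)$ of the functor $\mathcal L(A):Fin_*\longrightarrow Vect$. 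The main obstacle is the bookkeeping in the naturality computation of the middle paragraph; everything else is formal and runs parallel to the case of an ordinary derivation treated in Lemma~\ref{lem1} (which is recovered here as the special case $k=1$).
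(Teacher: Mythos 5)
Your proposal is correct and follows essentially the same route as the paper: the same distributed formula for $L_D^k([n])$, the same naturality check via the iterated Hasse--Schmidt relation and a regrouping of the double sum over tuples with total $k$, and the same colimit extension to $Fin_*$. Your explicit remark that normalization handles the empty fibres $\phi^{-1}(j)=\emptyset$ (via $D_\ell(1)=0$ for $\ell>0$) is a detail the paper leaves implicit, but it does not change the argument.
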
 

\begin{proof} It suffices to prove that for each $k\geq 0$, we have an endomorphism $L_D^k:\mathcal L(A)
\longrightarrow \mathcal L(A)$ of the functor $\mathcal L(A):Fin_*\longrightarrow Vect$ restricted to the subcategory 
$\Gamma$ of $Fin_*$. Given the higher derivation $D=\{D_n\}_{n\geq 0}$ and 
the integer $k\geq 0$, we define morphisms ($\forall$ $n\geq 0$)
\begin{equation}\label{3.3}
\begin{array}{c}
L_D^k([n]):\mathcal L(A)([n])\longrightarrow \mathcal L(A)([n]) \\
(a_0\otimes a_1\otimes ...\otimes a_n)\mapsto \underset{\tiny \begin{array}{c} (p_0,p_1,...,p_n)\\ p_0+p_1+...+p_n=k\\  \end{array}}{\sum}
(D_{p_0}(a_0)\otimes D_{p_1}(a_1)\otimes ....\otimes D_{p_n}(a_n))\\
\end{array}
\end{equation} For the sake of convenience, we will often denote a sum as in \eqref{3.3} taken over all ordered
tuples $(p_0,p_1,...,p_n)$ of non-negative integers such that
$p_0+p_1+...+p_n=k$ simply as 
\begin{equation}\label{3.4}
(a_0\otimes a_1\otimes ...\otimes a_n)\mapsto \underset{\tiny  p_0+p_1+...+p_n=k}{\sum}
(D_{p_0}(a_0)\otimes D_{p_1}(a_1)\otimes ....\otimes D_{p_n}(a_n))
\end{equation}  Let $\phi:[m]\longrightarrow [n]$  be a morphism in $\Gamma$. We let $N(j)$ denote the cardinality
of the set $\phi^{-1}(j)\subseteq [m]$ for any $0\leq j\leq n$. Then, we have, for any
$(a_0\otimes a_1\otimes ... \otimes a_m)\in A\otimes A^{\otimes m}$:
\begin{equation}\label{3.5}
\begin{array}{ll}
L_D^k([n])\circ \mathcal L(A)(\phi)(a_0\otimes a_1\otimes ... \otimes a_m)  &= L_D^k([n])\left(\underset{j=0}{\overset{n}{\bigotimes}}
\textrm{ }\underset{\phi(i)=j}{\prod}a_i\right)
\\ 
&= \underset{\tiny  p_0+p_1+...+p_n=k}{\sum} \left(\underset{j=0}{\overset{n}{\bigotimes}}
\textrm{ }D_{p_j}\left(\underset{\phi(i)=j}{\prod}a_i\right)\right)
\\ 
&= \underset{\tiny  p_0+p_1+...+p_n=k}{\sum} \left(\underset{j=0}{\overset{n}{\bigotimes}}\textrm{ }\textrm{ }\underset{q_1+...+q_{N(j)}=p_j}{\sum}\textrm{ } \underset{\phi(i)=j}{\prod}D_{q_i}(a_i) \right) \\
&= \underset{\tiny  r_0+r_1+...+r_m=k}{\sum} \left( \underset{j=0}{\overset{n}{\bigotimes}}
\textrm{ }\underset{\phi(i)=j}{\prod}D_{r_i}(a_i)\right) \\
&= \underset{\tiny  r_0+r_1+...+r_m=k}{\sum} \mathcal L(A)(\phi) \left( \underset{i=0}{\overset{m}{\bigotimes}}
\textrm{ }D_{r_i}(a_i)\right)\\
& =\mathcal L(A)(\phi)\circ L_D^k([m])(a_0\otimes a_1\otimes ... \otimes a_m) \\
\end{array}
\end{equation} From \eqref{3.5}, it follows that for each $k\geq 0$, $L_D^k:\mathcal L(A)
\longrightarrow \mathcal L(A)$ is an endomorphism of the functor $\mathcal L(A)$ restricted
to $\Gamma$ and hence, taking colimits as in the proof of Lemma \ref{lem1},  $L_D^k$ induces an endomorphism of the functor
$\mathcal L(A):Fin_* \longrightarrow Vect$.

\end{proof}

\medskip

\begin{thm}\label{prop3.3} Let $q\in \mathbb C$ be a primitive $N$-th root of unity. Let $A$ be a commutative algebra over
$\mathbb C$ and let $Y$ be a pointed simplicial finite set. Then, given a higher derivation $D=\{D_n\}_{n\geq 0}$ on $A$, for each $k\geq 0$, 
we have an induced morphism:
\begin{equation}
L_D^{Y,k}:\leftsub{q}{HH}_*^Y(A)=\underset{n=0}{\overset{\infty}{\bigoplus}}\textrm{ }\leftsub{q}{HH}_n^Y(A)\longrightarrow 
\leftsub{q}{HH}_*^Y(A) =\underset{n=0}{\overset{\infty}{\bigoplus}}\textrm{ }\leftsub{q}{HH}_n^Y(A)
\end{equation} on the $q$-Hochschild homology groups of $A$ of order $Y$. 
\end{thm}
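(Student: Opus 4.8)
The plan is to deduce the statement from Lemma~\ref{lem3.3} in exactly the way that Theorem~\ref{prop2.4r} was deduced from Lemma~\ref{lem1}, the only genuine input being the combinatorial lemma already proved. By Lemma~\ref{lem3.3}, for the chosen $k\geq 0$ the normalized higher derivation $D=\{D_n\}_{n\geq 0}$ yields an endomorphism $L_D^k:\mathcal L(A)\longrightarrow \mathcal L(A)$ of the functor $\mathcal L(A):Fin_*\longrightarrow Vect$, i.e.\ a natural transformation whose component at each object $T\in Fin_*$ is a linear map $L_D^k(T):\mathcal L(A)(T)\longrightarrow \mathcal L(A)(T)$. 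First I would precompose this natural transformation with the functor $Y:\Delta^{op}\longrightarrow Fin_*$ attached to the pointed simplicial finite set $Y$; this produces an endomorphism
\begin{equation*}
L_D^{Y,k}:\mathcal L^Y(A)\longrightarrow \mathcal L^Y(A)
\end{equation*}
of the simplicial vector space $\mathcal L^Y(A)$ of \eqref{2.4}. In particular $L_D^{Y,k}$ is a morphism of simplicial vector spaces, so its component in each simplicial degree $n$ commutes with every face map $d_n^i:\mathcal L^Y(A)_n\longrightarrow \mathcal L^Y(A)_{n-1}$, $0\leq i\leq n$.

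Next I would note that, since $L_D^{Y,k}$ commutes with each $d_n^i$ individually, it commutes with the $q$-Hochschild differential $\leftsub{q}{b}_n=\sum_{i=0}^n q^i d_n^i$ of \eqref{2.5cxw}, and therefore with every iterate $\leftsub{q}{b}^j$. Consequently $L_D^{Y,k}$ is an endomorphism of the $N$-complex $(\mathcal L^Y(A),\leftsub{q}{b})$: for each $n$ and each $1\leq i\leq N-1$ it carries $Ker(\leftsub{q}{b}^i:\mathcal L^Y(A)_n\to\mathcal L^Y(A)_{n-i})$ into itself and $Im(\leftsub{q}{b}^{N-i}:\mathcal L^Y(A)_{N-i+n}\to\mathcal L^Y(A)_n)$ into itself, hence induces a morphism on every $H_{\{i,n\}}(\mathcal L^Y(A),\leftsub{q}{b})$ and thus on $H_{\{n\}}(\mathcal L^Y(A),\leftsub{q}{b})=\leftsub{q}{HH}_n^Y(A)$, $n\geq 0$. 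Summing over $n$ gives the asserted endomorphism $L_D^{Y,k}:\leftsub{q}{HH}_*^Y(A)\longrightarrow\leftsub{q}{HH}_*^Y(A)$. (In fact this endomorphism is homogeneous of degree $0$ with respect to this grading; the direct-sum formulation is used only for compatibility with the Hopf-algebraic description in the next theorem.)

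The hard part is not in this argument at all: the genuine content---verifying that the level-$n$ operators $L_D^k([n])$ of \eqref{3.3} are compatible with all the structure maps $\mathcal L(A)(\phi)$, which hinges on the Hasse--Schmidt identity \eqref{3.2} together with the bookkeeping of refining a composition $p_0+\dots+p_n=k$ along the fibres of $\phi$---is precisely what Lemma~\ref{lem3.3} already establishes. Granting that lemma, the present statement is a formal consequence of the $N$-complex formalism of Definition~\ref{Def00}; the only point I would be careful to spell out is the passage from ``$L_D^{Y,k}$ commutes with each face map'' to ``$L_D^{Y,k}$ commutes with $\leftsub{q}{b}$, hence descends to $N$-complex homology'', which uses only $\mathbb C$-linearity of the face maps and treats $q$ merely as a fixed scalar.
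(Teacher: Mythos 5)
Your proposal is correct and follows exactly the paper's own route: invoke Lemma~\ref{lem3.3} to get the endomorphism $L_D^k$ of $\mathcal L(A)$, compose with $Y:\Delta^{op}\longrightarrow Fin_*$ to obtain an endomorphism of the simplicial vector space $\mathcal L^Y(A)$, and pass to the homology of the $N$-complex $(\mathcal L^Y(A),\leftsub{q}{b})$. The only difference is that you spell out the (routine) step that commuting with each face map implies commuting with $\leftsub{q}{b}$ and its iterates, which the paper leaves implicit.
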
 

\begin{proof} From Lemma \ref{lem3.3}, we know that for any $k\geq 0$, we have an endomorphism $L_D^k:
\mathcal L(A)\longrightarrow \mathcal L(A)$ of the functor $\mathcal L(A):Fin_*\longrightarrow Vect$. Composing
with the functor $Y:\Delta^{op}\longrightarrow Fin_*$ corresponding to the pointed simplicial finite set $Y$, we have an induced
endomorphism $L_D^{Y,k}:\mathcal L^Y(A)\longrightarrow \mathcal L^Y(A)$ of the functor 
$\mathcal L^Y(A)=\mathcal L(A)\circ Y:\Delta^{op}\overset{Y}{\longrightarrow}Fin_* \overset{\mathcal L(A)}{
\longrightarrow}Vect$. Accordingly, $L_D^{Y,k}$ induces an endomorphism on the homology objects of the $N$-complex $(\mathcal L^Y(A),\leftsub{q}{b})$ 
associated to the simplicial vector space  $\mathcal L^Y(A)$ as in \eqref{2.5cxw}. Hence, we have induced morphisms
$L_D^{Y,k}:\leftsub{q}{HH}_*^Y(A)\longrightarrow \leftsub{q}{HH}^Y_*(A)$ on the $q$-Hochschild homology groups
of order $Y$. 

\end{proof}

\medskip
We have already shown in the last section that $\leftsub{q}{HH}_*^Y(A)$ is a left module over the universal enveloping
algebra $\mathcal H=\mathcal U(Der(A))$ of the Lie algebra of derivations on $A$. 
 Given a higher derivation $D=\{D_k\}_{k\geq 0}$ on a $\mathbb C$-algebra $A$, Mirzavaziri \cite{Mirza} has shown that the higher derivation $D$ may be expressed as
follows: there exists a sequence of ordinary derivations $\{d_n\}_{n\geq 0}$, $d_n\in Der(A)$ such that:
\begin{equation}\label{3.7}
D_k=\sum_{i=1}^k\left(\underset{\sum_{j=1}^ir_j=k}{\sum}\left(\underset{j=1}{\overset{i}{\prod}}\frac{1}{r_j+...+r_i}\right)d_{r_1}....d_{r_i}\right)
\end{equation} From \eqref{3.7}, it is clear that given a higher derivation $D=\{D_k\}_{k\geq 0}$ on $A$, each $D_k$
is an element of the Hopf algebra $\mathcal H=\mathcal U(Der(A))$. Hence,  it follows from Proposition \ref{prop2.6} that
each operator $D_k\in \mathcal H$ induces a morphism $L_{D_k}^Y:\leftsub{q}{HH}_*^Y(A)
\longrightarrow \leftsub{q}{HH}_*^Y(A)$ on the $q$-Hochschild homology groups of order $Y$.  We will now show that
the morphisms $L_{D_k}^Y$, $k\geq 1$ are identical to the morphisms $L_D^{Y,k}: \leftsub{q}{HH}_*^Y(A)
\longrightarrow \leftsub{q}{HH}_*^Y(A)$ described in Proposition \ref{prop3.3}.

\medskip
\begin{thm}  Let $q\in \mathbb C$ be a primitive $N$-th root of unity. Let $A$ be a commutative algebra over
$\mathbb C$ and let $Y$ be a pointed simplicial finite set. Let $D=\{D_k\}_{k\geq 0}$ denote a higher derivation on $A$. For 
any $k\geq 1$, let $L_{D_k}^Y:\leftsub{q}{HH}_*^Y(A)
\longrightarrow \leftsub{q}{HH}_*^Y(A)$ be the morphism induced by $D_k\in \mathcal H$ as in Proposition
\ref{prop2.6} and let $L_D^{Y,k}: \leftsub{q}{HH}_*^Y(A)
\longrightarrow \leftsub{q}{HH}_*^Y(A)$ be the morphism induced by $D$ as in Proposition \ref{prop3.3}. Then, we have
$L_{D_k}^Y= L_{D}^{Y,k}: \leftsub{q}{HH}_*^Y(A)
\longrightarrow \leftsub{q}{HH}_*^Y(A)$. 
\end{thm}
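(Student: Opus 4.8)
The plan is to reduce everything to the level of endomorphisms of the functor $\mathcal L(A):\Gamma\longrightarrow Vect$, since both sides of the claimed identity arise, by construction (Lemmas \ref{lem1} and \ref{lem3.3}, and the colimit/composition-with-$Y$ arguments in Propositions \ref{prop2.6} and \ref{prop3.3}), from endomorphisms of $\mathcal L(A)$ on $\Gamma$ that are then extended along colimits to $Fin_*$ and composed with $Y:\Delta^{op}\longrightarrow Fin_*$. Thus it suffices to show that for each $k\geq 1$ the operator $L_{D_k}([n])$ coming from the element $D_k\in\mathcal H=\mathcal U(Der(A))$ (acting via the $\mathcal H$-module structure built in Proposition \ref{prop2.6}) coincides with the operator $L_D^k([n])$ defined by the explicit formula \eqref{3.3}, for every $n\geq 0$. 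Once this equality of functor endomorphisms on $\Gamma$ is established, passing to colimits and then composing with $Y$ and taking homology of the $N$-complex $(\mathcal L^Y(A),\leftsub{q}{b})$ is formal and identical to the arguments already given.

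First I would make explicit how $\mathcal H$ acts on $\mathcal L(A)([n]) = A\otimes A^{\otimes n}$. By Proposition \ref{prop2.6} (really Lemma \ref{lemKS}), a single derivation $d\in Der(A)$ acts by $L_d([n])(a_0\otimes\cdots\otimes a_n)=\sum_{i=0}^n a_0\otimes\cdots\otimes d(a_i)\otimes\cdots\otimes a_n$, and a product $d' d''\in\mathcal H$ acts by composition $L_{d'}([n])\circ L_{d''}([n])$. So a monomial $d_{r_1}d_{r_2}\cdots d_{r_i}$ acts as the composite $L_{d_{r_1}}([n])\circ\cdots\circ L_{d_{r_i}}([n])$. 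The key combinatorial computation is to expand this composite: applying $L_{d_{r_i}}([n])$ distributes $d_{r_i}$ onto one tensor slot, then $L_{d_{r_{i-1}}}([n])$ distributes $d_{r_{i-1}}$ onto one slot of the result, and so on; collecting terms, the composite sends $a_0\otimes\cdots\otimes a_n$ to a sum over all ways of assigning the ordered list $(d_{r_1},\dots,d_{r_i})$ to the slots $0,\dots,n$ — equivalently over set partitions (with order) of $\{1,\dots,i\}$ into $n+1$ blocks $B_0,\dots,B_n$ — of $\bigotimes_{j=0}^n \big(\prod_{\ell\in B_j} d_{r_\ell}\big)(a_j)$, where within each slot the derivations are composed in the order inherited from $r_1,\dots,r_i$.

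Next I would substitute the Mirzavaziri formula \eqref{3.7} for $D_k$ into $L_{D_k}([n])$, using linearity of the $\mathcal H$-action; this yields a sum over $i$, over compositions $\sum_{j=1}^i r_j = k$, of the coefficient $\prod_{j=1}^i (r_j+\cdots+r_i)^{-1}$ times the slot-distribution sum above. Now I would fix a target term: a tuple $(p_0,\dots,p_n)$ with $\sum p_j = k$, and inside each slot $j$ a particular composite $d_{s^{(j)}_1}\cdots d_{s^{(j)}_{m_j}}$ of ordinary derivations with $\sum_\ell s^{(j)}_\ell = p_j$. The content of the proof is then the identity: summing, over all ways of interleaving the per-slot ordered sequences into a single ordered sequence $(r_1,\dots,r_i)$ (with $i=\sum m_j$), the weights $\prod_{j=1}^i (r_j+\cdots+r_i)^{-1}$ produces exactly $\prod_{j=0}^n D_{p_j}$ expanded in the same basis — i.e. it reproduces, slot by slot and independently, the coefficient $\prod (s^{(j)}_\ell+\cdots+s^{(j)}_{m_j})^{-1}$ that \eqref{3.7} would assign to that composite inside $D_{p_j}$. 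Equivalently: the Mirzavaziri coefficient is ``multiplicative under the substitution'' \eqref{3.2}, which is really just the statement that \eqref{3.7} is the honest inverse-formula making $\{D_k\}$ satisfy the Hasse--Schmidt relation — so the cleanest route is to invoke, or re-derive, that the generating identity $\sum_k D_k t^k$ equals an ordered exponential in the $d_n$'s, whence the claim follows because applying such an ordered exponential slotwise to a tensor and applying $L$ of the same ordered exponential to the whole tensor agree by the Leibniz/coproduct compatibility of the $\mathcal H$-action.

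The main obstacle is precisely this combinatorial bookkeeping of the interleaving/shuffle of ordered tuples together with the telescoping behaviour of the coefficients $(r_j+\cdots+r_i)^{-1}$: one must check that the weight attached to a full ordered tuple $(r_1,\dots,r_i)$ factors correctly when the tuple is split according to which slot each entry landed in, and that no cross-terms between slots survive. I expect this to be manageable by reformulating \eqref{3.7} as the statement that the formal series $\Delta(t):=\sum_{k\geq 0}D_k t^k$ solves the differential-type recursion $\Delta(t)=1+\int$-style relation equivalent to $\frac{d}{dt}$-data $\{d_n\}$ — concretely that $\Delta(t)$ is the unique normalized solution with $D_k=\sum_i \sum_{\sum r_j=k}(\prod(r_j+\cdots+r_i)^{-1})d_{r_1}\cdots d_{r_i}$ — and then observing that $\mathcal L(A)$ carries this series to the same series applied slotwise, because each $L_{d_n}$ is a (sum of) first-order operators distributing over the tensor factors exactly as a derivation's contribution does in \eqref{3.2}. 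Then $L_{D_k}([n])$, being the degree-$k$ part of $L_{\Delta(t)}([n])=\bigotimes_j \Delta(t)|_{\text{slot }j}$, is exactly the degree-$k$ part $\sum_{p_0+\cdots+p_n=k}\bigotimes_j D_{p_j}$, which is $L_D^k([n])$ of \eqref{3.3}. Passing from $\Gamma$ to $Fin_*$ via colimits and then to $\mathcal L^Y(A)$ and its $N$-complex homology is routine and mirrors the proofs of Propositions \ref{prop2.6} and \ref{prop3.3}, completing the argument.
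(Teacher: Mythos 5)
Your reduction to the level of the functor $\mathcal L(A)$ on $\Gamma$, and the observation that the whole theorem comes down to comparing $L_{D_k}([n])$ with the explicit formula \eqref{3.3}, is exactly how the paper proceeds. Where the two arguments diverge is in the combinatorial core. The paper never expands the Mirzavaziri monomials $d_{r_1}\cdots d_{r_i}$ against the slots directly; instead it isolates the single Hopf-algebraic identity $\Delta^n(D_k)=\sum_{\sum p_i=k}D_{p_0}\otimes\cdots\otimes D_{p_n}$ (equation \eqref{3.9}) and proves it by induction on $k$, using only the recursion $D_{M+1}=\frac{1}{M+1}\sum_{m=0}^M d_{m+1}D_{M-m}$ of \eqref{3.10} and the telescoping evaluation $\sum_{m=0}^{p'_j-1}d_{m+1}D_{p'_j-m-1}=p'_j D_{p'_j}$; the closed-form coefficients $\prod_j(r_j+\cdots+r_i)^{-1}$ of \eqref{3.7} are never manipulated. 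Your second, ``cleanest'' route --- that the generating series $\sum_k D_k t^k$ is the unique normalized solution of the recursion determined by $\{d_n\}$, that $\Delta^n$ is an algebra map sending each $d_m$ to the primitive $\sum_j d_m^j$, and that the slotwise product $\bigotimes_j(\sum_k D_k t^k)$ solves the same recursion --- is precisely this induction repackaged as a uniqueness-of-solutions argument, so it is essentially the paper's proof and is sound. Your first route, the direct interleaving computation, is genuinely different: it requires the shuffle identity asserting that the weights $\prod_{j=1}^i(r_j+\cdots+r_i)^{-1}$, summed over all interleavings of the per-slot ordered sequences, factor as the product of the per-slot Mirzavaziri coefficients. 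That identity is true (it is the standard shuffle relation for iterated-integral coefficients, since $\prod_j(r_j+\cdots+r_i)^{-1}=\int_{0\le t_i\le\cdots\le t_1\le 1}\prod_j t_j^{r_j-1}\,dt_j$), but you state it without proof, and as written it is the entire content of that version of the argument; if you want to pursue it you must either prove the shuffle relation or fall back on the generating-function/recursion argument, which avoids it entirely. One further small point worth making explicit in either route: the formula $L_h([n])(a_0\otimes\cdots\otimes a_n)=\sum h_{(1)}(a_0)\otimes\cdots\otimes h_{(n+1)}(a_n)$ holds for $h\in Der(A)$ by inspection and extends to all of $\mathcal U(Der(A))$ because both sides are multiplicative in $h$; you use this implicitly when you say a monomial acts by the composite of the $L_{d_{r_j}}$, and it is what licenses replacing the action of $D_k$ by $\Delta^n(D_k)$ applied slotwise.
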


\begin{proof} From the proofs of Lemma \ref{lem1} and Lemma \ref{lemKS}, it follows that the element $D_k\in \mathcal H=\mathcal U(Der(A))$ of the universal
enveloping algebra $\mathcal H$ defines an endomorphism $L_{D_k}:\mathcal L(A)\longrightarrow 
\mathcal L(A)$ of the functor $\mathcal L(A):Fin_*\longrightarrow Vect$. From the proofs of Proposition \ref{prop2.4r} and Proposition \ref{prop2.6}, it
is clear that the morphism $L_{D_k}^Y:\leftsub{q}{HH}_*^Y(A)
\longrightarrow \leftsub{q}{HH}_*^Y(A)$ is obtained from the endomorphism $L_{D_k}^Y:
\mathcal L^Y(A)\longrightarrow \mathcal L^Y(A)$ of the functor $\mathcal L^Y(A)=\mathcal L(A)\circ Y$ induced by
$L_{D_k}:\mathcal L(A)\longrightarrow \mathcal L(A)$. 

\medskip
Similarly, from Lemma \ref{lem3.3}, it follows that the higher derivation $D$ induces an endomorphism $L_D^k:
\mathcal L(A)\longrightarrow \mathcal L(A)$ of the functor $\mathcal L(A):Fin_*\longrightarrow Vect$. From the proof of Proposition \ref{prop3.3}, it follows that the morphism $L_D^{Y,k}: \leftsub{q}{HH}_*^Y(A)
\longrightarrow \leftsub{q}{HH}_*^Y(A)$ is obtained from the endomorphism $L_D^{Y,k}:\mathcal L^Y(A)
\longrightarrow \mathcal L^Y(A)$ of the functor $\mathcal L^Y(A)=\mathcal L(A)\circ Y$ induced by
$L_D^{k}:\mathcal L(A)\longrightarrow \mathcal L(A)$. Hence, in order to prove the result, we need to show that
$L_D^k=L_{D_k}$ as endomorphisms of the functor $\mathcal L(A):Fin_*\longrightarrow Vect$. As before, it suffices
to show that $L_D^k=L_{D_k}$ as  endomorphisms of the functor $\mathcal L(A)$ restricted to the subcategory
$\Gamma$ of $Fin_*$.

\medskip Let $\Delta:
\mathcal H\longrightarrow \mathcal H\otimes \mathcal H$ denote
the coproduct on $\mathcal H$. For any $h\in \mathcal H$ and any $n\geq 0$, we write $\Delta^n(h)=\sum h_{(1)}\otimes 
h_{(2)}\otimes ... \otimes h_{(n+1)}$. Then, we have an induced endomorphism $L_h:
\mathcal L(A)\longrightarrow \mathcal L(A)$ of the functor $\mathcal L(A):Fin_*
\longrightarrow Vect$. Further, we note that the equation
\begin{equation}
L_h([n])(a_0\otimes a_1\otimes ... \otimes a_n)=\sum (h_{(1)}(a_0)\otimes h_{(2)}(a_1)\otimes ...h_{(n+1)}(a_n))\quad
\forall (a_0\otimes ... \otimes a_n)\in \mathcal L(A)([n])\
\end{equation} holds for all $h\in Der(A)\subseteq \mathcal H$ and hence for all $h\in \mathcal H=\mathcal U(Der(A))$. 
From the definition of $L_D^k$ in Lemma \ref{lem3.3}, we now see that in order to show that 
$L_D^k=L_{D_k}$, it suffices to show that
\begin{equation}\label{3.9}
\Delta^n(D_k)=\sum_{\sum_{i=0}^np_i=k}D_{p_0}\otimes D_{p_1}\otimes ... \otimes D_{p_n} \qquad \forall\textrm{ } n\geq 0
\end{equation} We will prove \eqref{3.9} by induction on $k$. For any given $n\geq 0$, it is clear that the equation 
\eqref{3.9} holds for $k=0$ and $k=1$. We now suppose that its holds for any $0\leq k\leq K$. From \cite[Proposition 2.1]{Mirza}, we know that
\begin{equation}\label{3.10}
D_{M+1}=\frac{1}{M+1}\sum_{m=0}^Md_{m+1}D_{M-m}\qquad \forall\textrm{ }M\geq 0
\end{equation} where the $d_{m+1}$ are the derivations corresponding to the higher derivation $D=\{D_n\}_{n\geq 0}$
as described in \eqref{3.7}. From \eqref{3.10}, it follows that $\Delta^n(D_{K+1})=\frac{1}{K+1}\sum_{m=0}^K\Delta^n(d_{m+1})\Delta^n(D_{K-m})$ and hence 
\begin{equation}\label{3.11}
\Delta^n(D_{K+1})=
\frac{1}{K+1}\sum_{m=0}^K\left(\sum_{j=0}^nd_{m+1}^j\right)\left(\sum_{\sum_{i=0}^np_i=K-m}D_{p_0}\otimes 
D_{p_1}\otimes ...\otimes D_{p_n}\right)
\end{equation} where $d^j_{m+1}$ denotes the term $1\otimes 1\otimes ...\otimes d_{m+1} \otimes ... \otimes 1$ (i.e., $d_{m+1}$ at the $j$-th position) appearing in the expression for $\Delta^n(d_{m+1})$. We now consider ordered
tuples $(p'_0,p'_1,...,p'_n)$ of non-negative integers such that $p'_0+p'_1+....+p'_n=K+1$. Then, we can write:
\begin{equation}\label{3.12}
\begin{array}{l}
\underset{m=0}{\overset{K}{\sum}}\left(\underset{j=0}{\overset{n}{\sum}}d_{m+1}^j\right)\left(\underset{\sum_{i=0}^np_i=K-m}{\sum}D_{p_0}\otimes 
D_{p_1}\otimes ...\otimes D_{p_n}\right)\\
=\underset{\sum_{i=0}^np'_i=K+1}{\sum}\textrm{ }\underset{j=0,p'_j\geq 1}{\overset{n}{\sum}}\textrm{ }
\underset{m=0}{\overset{p'_j-1}{\sum}}\textrm{ }d^j_{m+1}\cdot (D_{p'_0}\otimes ... \otimes D_{p'_j-m-1}\otimes ... 
\otimes D_{p'_n}) \\
= \underset{\sum_{i=0}^np'_i=K+1}{\sum}\textrm{ }\underset{j=0,p'_j\geq 1}{\overset{n}{\sum}}\textrm{ }
\underset{m=0}{\overset{p'_j-1}{\sum}}\textrm{ }(D_{p'_0}\otimes ... \otimes d_{m+1} D_{p'_j-m-1}\otimes ... 
\otimes D_{p'_n}) \\
\end{array}
\end{equation}  From \eqref{3.10}, it follows that $\sum_{m=0}^{p'_j-1}d_{m+1}D_{p'_j-m-1}=p'_j\cdot D_{p'_j}$ and hence:
\begin{equation}\label{3.13}
\underset{m=0}{\overset{p'_j-1}{\sum}}\textrm{ }(D_{p'_0}\otimes ... \otimes d_{m+1} D_{p'_j-m-1}\otimes ... 
\otimes D_{p'_n}) = p'_j\cdot (D_{p'_0}\otimes ... \otimes D_{p'_j}\otimes ... 
\otimes D_{p'_n}) 
\end{equation} Combining \eqref{3.11}, \eqref{3.12} and \eqref{3.13}, it follows that:
\begin{equation}
\begin{array}{ll}
\Delta^n(D_{K+1})&=\frac{1}{K+1} \left(\underset{\sum_{i=0}^np'_i=K+1}{\sum}\textrm{ }\underset{j=0,p'_j\geq 1}{\overset{n}{\sum}}\textrm{ }
 p'_j\cdot (D_{p'_0}\otimes ... \otimes D_{p'_j}\otimes ... 
\otimes D_{p'_n})\right) \\ 
& = \frac{1}{K+1} \left(\underset{\sum_{i=0}^np'_i=K+1}{\sum}\textrm{ }(K+1)
 \cdot (D_{p'_0}\otimes ... \otimes D_{p'_j}\otimes ... 
\otimes D_{p'_n})\right) \\ 
& = \underset{\sum_{i=0}^np'_i=K+1}{\sum}\textrm{ }
 (D_{p'_0}\otimes ... \otimes D_{p'_j}\otimes ... 
\otimes D_{p'_n}) \\ 
\end{array}
\end{equation} This proves the result of \eqref{3.9} for $K+1$. 

\end{proof}

 \medskip
\section{Action on bivariant $q$-Hochschild cohomology groups}

\medskip
Let $A$ be a commutative algebra over $\mathbb C$ and let $q\in \mathbb C$ be a primitive $N$-th root of unity. Let $Y$ be a pointed simplicial finite set. In this section, we will define the bivariant $q$-Hochschild cohomology
groups $\{HH_Y^n(A,A)\}_{n\in \mathbb Z}$ of $A$ of order $Y$ and show that a derivation $D$ on $A$ induces a morphism $\underline{L}_D^{Y,n}(A,A):\leftsub{q}{HH}^n_Y(A,A)
\longrightarrow \leftsub{q}{HH}^n_Y(A,A)$. For the ordinary bivariant Hochschild cohomology groups $\{HH^n(A,A)\}_{n\in 
\mathbb Z}$, we have already studied this morphism in \cite{AB2}. For the definition and properties
of ordinary bivariant Hochschild cohomology, we refer the reader to \cite[$\S$ 5.1]{Lod} (see also
the original paper of Jones and Kassel \cite{JK}). We start by defining the bivariant $q$-Hochschild cohomology
groups of order $Y$. 

\medskip
\begin{defn}\label{Df4.1} Let $(\mathcal L^Y(A),\leftsub{q}{b})$ be the $N$-complex corresponding to the simplicial vector space
$\mathcal L^Y(A)$ as defined  in \eqref{2.5cxw}. We consider the $q$-Hom complex
$\underline{Hom}(\mathcal L^Y(A),\mathcal L^Y(A))$ of these $N$-complexes which is defined as follows:
\begin{equation}\label{4.1}
\underline{Hom}(\mathcal L^Y(A),\mathcal L^Y(A))_n := \underset{i\in \mathbb Z}{\prod}\textrm{ }Hom_{Vect}(\mathcal L^Y(A)_i,
\mathcal L^Y(A)_{i+n})  
\end{equation} Further, if the family $f=\{f_i:\mathcal L^Y(A)_i\longrightarrow \mathcal L^Y(A)_{i+n}\}_{i\in \mathbb Z}$ is an element
of $\underline{Hom}(\mathcal L^Y(A),\mathcal L^Y(A))_n$, then the differential 
$\leftsub{q}{\partial}_n:\underline{Hom}(\mathcal L^Y(A),\mathcal L^Y(A))_n\longrightarrow \underline{Hom}(\mathcal L^Y(A),\mathcal L^Y(A))_{n-1}$ is defined by setting:
\begin{equation}\label{4.2}
\begin{array}{c}
\leftsub{q}{\partial}_n(f):=\{\leftsub{q}{\partial}_n(f)_i:\mathcal L^Y(A)_i\longrightarrow \mathcal L^Y(A)_{i+n-1}\}_{i
\in \mathbb Z} \\
\leftsub{q}{\partial}_n(f)_i= \leftsub{q}{b}_{i+n}\circ f_i - q^n f_{i-1}\circ \leftsub{q}{b}_i
\end{array}
\end{equation} For any given $n\in \mathbb Z$, we define the bivariant $q$-Hochschild cohomology group $\leftsub{q}{HH}^n_Y(A,A)$ of $A$ of order $Y$ to be the homology object
\begin{equation}\label{4.3}
\leftsub{q}{HH}^n_Y(A,A):=H_{\{-n\}}(\underline{Hom}(\mathcal L^Y(A),\mathcal L^Y(A)), \leftsub{q}{\partial})
\end{equation} of the $N$-complex $(\underline{Hom}(\mathcal L^Y(A),\mathcal L^Y(A)),\leftsub{q}{\partial})$.

\end{defn}

\medskip
We mention that it follows from \cite[Proposition 1.8]{Kap} that the $q$-Hom complex $(\underline{Hom}(\mathcal L^Y(A),\mathcal L^Y(A)), \leftsub{q}{\partial})$ as defined in \eqref{4.1} and \eqref{4.2} is also an $N$-complex. We now make
the convention that if $M=\oplus_{i\in \mathbb Z}M_i$ is a graded vector space and $f=\{f_i:M_i\longrightarrow M_{i+m}\}_{i\in \mathbb Z}$ and $g=\{g_i:M_i
\longrightarrow M_{i+n}\}_{i\in \mathbb Z}$ are two morphisms of homogenous degree $m$ and $n$ respectively, we will
write $[f,g]:=f\circ g-q^{mn}g\circ f$ for their graded $q$-commutator. 

\medskip
\begin{lem}\label{lem4.2} Let $L^m=\{L^m_i\}_{i\in \mathbb Z}$ denote a collection of maps $L^m_i:\mathcal L^Y(A)_i
\longrightarrow \mathcal L^Y(A)_{i+m}$. Given an element $f=\{f_i\}_{i\in \mathbb Z}$ in $\underline{Hom}(\mathcal L^Y(A),\mathcal L^Y(A))_n$, we define $\underline{L}^m(f)\in \underline{Hom}(\mathcal L^Y(A),\mathcal L^Y(A))_{m+n}$ by setting:
\begin{equation}
\underline{L}^m(f)_i:\mathcal L^Y(A)_i\longrightarrow \mathcal L^Y(A)_{i+m+n}\qquad \underline{L}^m(f)_i:=L^m_{i+n}\circ f_i - q^{mn}f_{i+m}\circ L^m_i 
\end{equation} Then, if $q^{2m}=1$, the endomorphism $\underline{L}^m: \underline{Hom}(\mathcal L^Y(A),\mathcal L^Y(A))
\longrightarrow \underline{Hom}(\mathcal L^Y(A),\mathcal L^Y(A))$ of homogeneous degree $m$ satisfies the following relation:
\begin{equation}
[\leftsub{q}{\partial},\underline{L}^m](f)= [\leftsub{q}{b},L^m]f + q^{mn+m+n}f[L^m,\leftsub{q}{b}] \qquad
\forall\textrm{ }f\in \underline{Hom}(\mathcal L^Y(A),\mathcal L^Y(A))_n, n\in \mathbb Z
\end{equation} 
\end{lem}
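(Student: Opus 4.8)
The plan is to verify the claimed identity by unwinding both sides as families of maps indexed by $i \in \mathbb{Z}$ and comparing terms directly; this is ultimately a bookkeeping exercise tracking the powers of $q$ that appear from the Koszul-style sign conventions. First I would expand the left-hand side. By the graded $q$-commutator convention, $[\leftsub{q}{\partial},\underline{L}^m] = \leftsub{q}{\partial}\circ\underline{L}^m - q^{m\cdot(-1)}\underline{L}^m\circ\leftsub{q}{\partial}$, where $\leftsub{q}{\partial}$ has homogeneous degree $-1$ and $\underline{L}^m$ has degree $m$; since $q^{2m}=1$ we have $q^{-m}=q^m$, so the sign coefficient is $q^m$. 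Applying this to $f \in \underline{Hom}(\mathcal L^Y(A),\mathcal L^Y(A))_n$ and extracting the $i$-th component, I would use formula \eqref{4.2} for $\leftsub{q}{\partial}$ and the defining formula for $\underline{L}^m$ from Lemma \ref{lem4.2}, being careful that $\underline{L}^m(f)$ has degree $m+n$ so that $\leftsub{q}{\partial}$ contributes a factor $q^{m+n}$ when acting on it, whereas $\leftsub{q}{\partial}(f)$ has degree $n-1$ so that $\underline{L}^m$ contributes a factor $q^{m(n-1)}$ when acting on it.

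Concretely, $(\leftsub{q}{\partial}\,\underline{L}^m(f))_i = \leftsub{q}{b}\circ \underline{L}^m(f)_i - q^{m+n}\,\underline{L}^m(f)_{i-1}\circ\leftsub{q}{b}$, and $(\underline{L}^m\,\leftsub{q}{\partial}(f))_i = L^m\circ\leftsub{q}{\partial}(f)_i - q^{m(n-1)}\,\leftsub{q}{\partial}(f)_{i+m}\circ L^m$. Substituting $\underline{L}^m(f)_i = L^m\circ f_i - q^{mn}f\circ L^m$ and $\leftsub{q}{\partial}(f)_i = \leftsub{q}{b}\circ f_i - q^n f\circ\leftsub{q}{b}$ into these (suppressing subscripts on the maps, which are forced by the gradings), I would obtain eight terms on each side. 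Four of the resulting terms are of ``mixed'' type $L^m\circ f\circ\leftsub{q}{b}$ or $\leftsub{q}{b}\circ f\circ L^m$ (with various $q$-coefficients), and I expect these to cancel in pairs once I use $q^{2m}=1$ to reconcile coefficients such as $q^{m+n+mn}$ against $q^{mn+n}$, etc. The four surviving terms should organize into $\leftsub{q}{b}\circ L^m\circ f - q^{?}L^m\circ\leftsub{q}{b}\circ f$ and $q^{?}f\circ L^m\circ\leftsub{q}{b} - q^{?}f\circ\leftsub{q}{b}\circ L^m$; matching the coefficients against the right-hand side, where $[\leftsub{q}{b},L^m]f = (\leftsub{q}{b}\circ L^m - q^{-m}L^m\circ\leftsub{q}{b})f = (\leftsub{q}{b}\circ L^m - q^{m}L^m\circ\leftsub{q}{b})f$ (again using $q^{2m}=1$, since $\leftsub{q}{b}$ has degree $-1$ and $L^m$ degree $m$) and similarly $f[L^m,\leftsub{q}{b}] = f(L^m\circ\leftsub{q}{b} - q^{-m}\leftsub{q}{b}\circ L^m)$, gives the stated formula after one pulls out the overall factor $q^{mn+m+n}$ on the second group.

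The main obstacle will be precisely the coefficient bookkeeping: keeping straight which factor of $q$ each application of $\leftsub{q}{\partial}$, $\underline{L}^m$, and the bracket contributes, given that the contribution depends on the homogeneous degree of the argument at each stage. The hypothesis $q^{2m}=1$ is exactly what is needed to collapse the various coefficients $q^{\pm m}$, $q^{m^2}$, and cross-terms like $q^{m+mn}$ versus $q^{mn-m}$ into equality, so at each cancellation step I would flag where it is used. No conceptual difficulty is expected beyond this; the identity is a formal consequence of the definitions in \eqref{4.2} and Lemma \ref{lem4.2} together with the single relation $q^{2m}=1$, and the computation, while somewhat lengthy, is entirely mechanical.
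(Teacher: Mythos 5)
Your proposal is correct and follows essentially the same route as the paper's proof: expand $(\leftsub{q}{\partial}\circ\underline{L}^m)(f)_i$ and $(\underline{L}^m\circ\leftsub{q}{\partial})(f)_i$ into their four terms each with the degree-dependent powers $q^{m+n}$ and $q^{m(n-1)}$, observe that the mixed terms $\leftsub{q}{b}\circ f\circ L^m$ and $L^m\circ f\circ\leftsub{q}{b}$ carry the common factor $(1-q^{-2m})$ and hence vanish under the hypothesis $q^{2m}=1$, and regroup the survivors into the two brackets. The bookkeeping you outline matches the paper's computation term for term (the only slip is the stray claim of ``eight terms on each side''), so carrying it out mechanically completes the proof.
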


\begin{proof} We consider:
\begin{equation}\label{4.6}
\begin{array}{ll}
((\leftsub{q}{\partial}\circ \underline{L}^m)(f))_i& = \leftsub{q}{b}_{i+m+n}\circ \underline{L}^m(f)_i- q^{m+n}\underline{L}^m(f)_{i-1}\circ \leftsub{q}{b}_i\\
& = \leftsub{q}{b}_{i+m+n}\circ L_{i+n}^m\circ f_i - q^{mn}  \leftsub{q}{b}_{i+m+n}\circ f_{i+m}\circ L_i^m \\
& \textrm{ }\textrm{ } - q^{m+n}L^m_{i+n-1}\circ f_{i-1}\circ \leftsub{q}{b}_i+ q^{mn+m+n}f_{i+m-1}\circ L_{i-1}^m
\circ \leftsub{q}{b}_i\\ 
((\underline{L}^m\circ \leftsub{q}{\partial})(f))_i & = L^m_{i+n-1}\circ \leftsub{q}{\partial}(f)_i - q^{m(n-1)}\leftsub{q}{\partial}(f)_{i+m}\circ L_i^m \\
& = L^m_{i+n-1}\circ \leftsub{q}{b}_{i+n}\circ f_i - q^nL^m_{i+n-1}\circ f_{i-1}\circ \leftsub{q}{b}_i  \\
& \textrm{ }\textrm{ }- q^{m(n-1)}\leftsub{q}{b}_{i+m+n}\circ f_{i+m}\circ L^m_i + q^{mn-m+n}f_{i+m-1}\circ\leftsub{q}{b}_{i+m}\circ L_i^m \\
\end{array}
\end{equation}
From \eqref{4.6}, it follows that:
\begin{equation*}
\begin{array}{l}
([\leftsub{q}{\partial},\underline{L}^m](f))_i =((\leftsub{q}{\partial}\circ \underline{L}^m)(f))_i - 
q^{-m}((\underline{L}^m\circ \leftsub{q}{\partial})(f))_i  \\
 = (\leftsub{q}{b}_{i+m+n}\circ L_{i+n}^m - q^{-m}L^m_{i+n-1}\circ \leftsub{q}{b}_{i+n})\circ f_i 
+f_{i+m-1}\circ q^{mn+m+n}(L^m_{i-1}\circ \leftsub{q}{b}_i-q^{-2m}(q^{-m}\leftsub{q}{b}_{i+m}\circ L_i^m))\\
-q^{mn}(1-q^{-2m})\leftsub{q}{b}_{i+m+n}\circ f_{i+m}\circ L^m_i
-q^{m+n}(1-q^{-2m})L^m_{i+n-1}\circ f_{i-1}\circ \leftsub{q}{b}_i\\
\end{array}
\end{equation*}
Combining with the fact that $q^{2m}=1$, it follows from the above expression that:
\begin{equation}
[\leftsub{q}{\partial},\underline{L}^m](f)= [\leftsub{q}{b},L^m]f + q^{mn+m+n}f[L^m,\leftsub{q}{b}] \\
\end{equation}
\end{proof}

\medskip
\begin{thm} Let $q\in \mathbb C$ be a primitive $N$-th root of unity. Let $A$ be a commutative algebra
over $\mathbb C$ and let $D:A\longrightarrow A$ be a derivation on $A$. Let $Y$ be a pointed simplicial finite set. Then, for each
$n\in \mathbb Z$, the derivation $D$ on $A$ induces a morphism
\begin{equation}
\underline{L}_D^{Y,n}: \leftsub{q}{HH}_Y^n(A,A)\longrightarrow  \leftsub{q}{HH}_Y^n(A,A)
\end{equation} on the bivariant $q$-Hochschild cohomology groups of order $Y$. 
\end{thm}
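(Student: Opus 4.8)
The plan is to reduce the statement to Lemma \ref{lem4.2} applied in the degenerate case $m=0$, taking for $L^0$ the endomorphism of $\mathcal L^Y(A)$ produced by the derivation $D$ in Lemma \ref{lem1}.

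First I would invoke the proof of Lemma \ref{lem1} (and of Proposition \ref{prop2.4r}) to obtain the endomorphism $L_D^Y:\mathcal L^Y(A)\longrightarrow\mathcal L^Y(A)$ of the simplicial vector space $\mathcal L^Y(A)$. Setting $L^0_i:=L_D^Y([i]):\mathcal L^Y(A)_i\longrightarrow\mathcal L^Y(A)_i$ for $i\geq 0$ and $L^0_i:=0$ for $i<0$, one gets a collection $L^0=\{L^0_i\}_{i\in\mathbb Z}$ of maps of homogeneous degree $0$ as in the hypothesis of Lemma \ref{lem4.2}. Since $L_D^Y$ is a morphism of simplicial vector spaces, each component commutes with all the face maps, $d^j_i\circ L^0_i=L^0_{i-1}\circ d^j_i$ for $0\leq j\leq i$, and hence with $\leftsub{q}{b}_i=\sum_{j=0}^i q^j d^j_i$; in the $q$-commutator notation fixed just before Lemma \ref{lem4.2} this says exactly $[\leftsub{q}{b},L^0]=0=[L^0,\leftsub{q}{b}]$.

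Next I would apply Lemma \ref{lem4.2} with $m=0$ — the hypothesis $q^{2m}=1$ is then vacuous — to the degree-$0$ endomorphism $\underline{L}^0$ of $\underline{Hom}(\mathcal L^Y(A),\mathcal L^Y(A))$ defined there, which on $f\in\underline{Hom}(\mathcal L^Y(A),\mathcal L^Y(A))_n$ is given componentwise by $\underline{L}^0(f)_i=L^0_{i+n}\circ f_i-f_i\circ L^0_i$. The conclusion of the lemma specializes to
\[
[\leftsub{q}{\partial},\underline{L}^0](f)=[\leftsub{q}{b},L^0]f+q^n f[L^0,\leftsub{q}{b}],
\]
and the right-hand side vanishes by the previous step. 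Hence $\leftsub{q}{\partial}\circ\underline{L}^0=\underline{L}^0\circ\leftsub{q}{\partial}$, i.e., $\underline{L}^0$ is an endomorphism of the $N$-complex $(\underline{Hom}(\mathcal L^Y(A),\mathcal L^Y(A)),\leftsub{q}{\partial})$.

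Finally I would observe that any degree-$0$ endomorphism of an $N$-complex commuting with the differential preserves $Ker(\leftsub{q}{\partial}^i)$ and $Im(\leftsub{q}{\partial}^{N-i})$ in each internal degree, hence descends to each $H_{\{i,-n\}}$ and so to $H_{\{-n\}}(\underline{Hom}(\mathcal L^Y(A),\mathcal L^Y(A)),\leftsub{q}{\partial})=\leftsub{q}{HH}^n_Y(A,A)$ (Definitions \ref{Def00} and \ref{Df4.1}); this descended map is the desired $\underline{L}_D^{Y,n}$. I expect no essential obstacle: the only things to watch are that all the $q$-exponents appearing in Lemma \ref{lem4.2} collapse harmlessly once $m=0$, and that $L_D^Y$ is compatible with $\leftsub{q}{b}$ — which is immediate, since $L_D^Y$ is simplicial and $\leftsub{q}{b}$ is just a $\mathbb C$-linear combination of face maps.
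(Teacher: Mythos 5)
Your proposal is correct and follows essentially the same route as the paper: obtain the simplicial endomorphism $L_D^Y$ from Lemma \ref{lem1} and Proposition \ref{prop2.4r}, apply Lemma \ref{lem4.2} with $m=0$, observe that $[\leftsub{q}{b},L_D^Y]=[L_D^Y,\leftsub{q}{b}]=0$ because $L_D^Y$ commutes with the face maps and $\leftsub{q}{b}$ is a linear combination of them, and conclude that the induced degree-zero endomorphism of $\underline{Hom}(\mathcal L^Y(A),\mathcal L^Y(A))$ commutes with $\leftsub{q}{\partial}$ and hence descends to each $H_{\{-n\}}$. The only difference is cosmetic (you verify the vanishing of the commutators before invoking the lemma rather than after, and you spell out the descent to the homology of the $N$-complex slightly more explicitly).
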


\begin{proof} From the proof of Proposition \ref{prop2.6}, we know that the derivation $D$ induces an endomorphism
$L_D^Y:\mathcal L^Y(A)\longrightarrow \mathcal L^Y(A)$ of the simplicial vector space $\mathcal L^Y(A)$. Accordingly, we have a collection
of maps $L_D^Y=\{L_{D,i}^{Y}:\mathcal L^Y(A)_i\longrightarrow \mathcal L^Y(A)_i\}_{i\in \mathbb Z}$ determined by
the endomorphism $L_D^Y$. Applying Lemma \ref{lem4.2} with $m=0$ (and hence $q^{2m}=1$), it follows that $L_D^Y$
determines a morphism 
\begin{equation}
\underline{L}_D^Y: \underline{Hom}(\mathcal L^Y(A),\mathcal L^Y(A))\longrightarrow \underline{Hom}(
\mathcal L^Y(A),\mathcal L^Y(A))
\end{equation} of homogeneous degree $m=0$ satisfying:
\begin{equation}\label{4.10}
[\leftsub{q}{\partial},\underline{L}^Y_D](f)= [\leftsub{q}{b},L^Y_D]f + q^{n}f[L^Y_D,\leftsub{q}{b}]\qquad 
\forall\textrm{ }f\in \underline{Hom}(\mathcal L^Y(A),\mathcal L^Y(A))_n, n\in \mathbb Z
\end{equation}
Again, since $L_D^Y:\mathcal L^Y(A)\longrightarrow \mathcal L^Y(A)$ is a morphism of simplicial vector spaces, 
the morphisms $\{L_{D,i}^{Y}:\mathcal L^Y(A)_i\longrightarrow \mathcal L^Y(A)_i\}_{i\in \mathbb Z}$ commute with the face maps $d_i^j:\mathcal L^Y(A)_i\longrightarrow \mathcal L^Y(A)_{i-1}$, $0\leq j\leq i$, $i\geq 0$ of the simplicial
vector space $\mathcal L^Y(A)$. By definition, $\leftsub{q}{b}_i:=\sum_{j=0}^iq^jd_i^j$ and hence we have:
\begin{equation}
[\leftsub{q}{b},L^Y_D]=[L^Y_D,\leftsub{q}{b}]=0 
\end{equation} Applying this to \eqref{4.10}, it follows that:
\begin{equation}\label{4.12}
[\leftsub{q}{\partial},\underline{L}^Y_D]=\leftsub{q}{\partial}\circ \underline{L}^Y_D-q^{-m}\underline{L}^Y_D
\circ \leftsub{q}{\partial}=\leftsub{q}{\partial}\circ \underline{L}^Y_D-\underline{L}^Y_D
\circ \leftsub{q}{\partial}=0
\end{equation} From \eqref{4.12}, it follows that the endomorphism $\underline{L}_D^Y: \underline{Hom}(\mathcal L^Y(A),\mathcal L^Y(A))\longrightarrow \underline{Hom}(
\mathcal L^Y(A),\mathcal L^Y(A))$ of degree zero commutes with the differential $\leftsub{q}{\partial}$ on the $N$-complex
$\underline{Hom}(\mathcal L^Y(A),\mathcal L^Y(A))$. This induces morphisms ($\forall$ $n\in \mathbb Z$):
\begin{equation}
\begin{CD}
 \leftsub{q}{HH}_Y^n(A,A)=H_{\{-n\}}(\underline{Hom}(\mathcal L^Y(A),\mathcal L^Y(A)), \leftsub{q}{\partial})\\ @V\underline{L}_D^{Y,n}VV \\ \leftsub{q}{HH}_Y^n(A,A)=H_{\{-n\}}(\underline{Hom}(\mathcal L^Y(A),\mathcal L^Y(A)), \leftsub{q}{\partial}) \\
\end{CD}
\end{equation} on the bivariant $q$-Hochschild cohomology groups of order $Y$. 
\end{proof}

\end{document}